\newtheorem{theorem}{Theorem}[section]
\theoremstyle{definition}
\newtheorem{conjecture}[theorem]{Conjecture}
\newtheorem{lemma}[theorem]{Lemma}
\theoremstyle{remark}
\newtheorem{remark}[theorem]{Remark}
\begin{document}

\author[Dongmin Gang]{Dongmin Gang}

\author[Seonhwa Kim]{Seonhwa Kim}

\author[Seokbeom Yoon]{Seokbeom Yoon}

\address{(DG) Asia Pacific Center for Theoretical Physics (APCTP), Headquarters San 31, Hyoja-dong, Nam-gu, Pohang 790-784, Korea}
\address{(SK and SY) School of Mathematics, Korea Institute for Advanced Study, Seoul 02455, Korea}
\title[Torsion]
{Adjoint Reidemeister torsions from wrapped M5-branes}

\begin{abstract}
We introduce a vanishing property of adjoint Reidemeister torsions of a cusped hyperbolic 3-manifold derived from the physics of wrapped M5-branes on the manifold. To support our physical observation, we present a rigorous proof for the figure-eight knot complement with respect to all slopes. We also present numerical verification for several knots.
\end{abstract}


\maketitle
\tableofcontents

\section{Introduction}

 Since the advent of Witten's reformulation of the Jones polynomial 
 using $\mathrm{SU}(2)$ Chern-Simons theory \cite{witten1989}, 
 there have been fruitful interplays between 
 3-dimensional quantum field theories (3D QFTs) and mathematics of 3-manifolds and knots.
 In the interplays, mathematicians provide rigorous approaches to topological quantum field theories while physicists suggest new topological invariants and conjectures which seem to be unexpected to mathematicians. 
  Recently,  3D-3D correspondence \cite{Terashima:2011qi,Dimofte:2011ju}   accelerates the interplays drastically as it allows us to study various supersymmetric quantities in 3D supersymmetric QFTs in terms of mathematical quantities of 3-manifolds.
 
 In this paper, we  study 3D $\mathcal{N}=2$ supersymmetric QFTs $\mathcal{T}_{N=2}[\mathcal{M},\mathcal{K}]$ labeled by a closed 3-manifold $\mathcal{M}$ and a knot $\mathcal{K}\subset \mathcal{M}$.
Under mild assumptions, we propose concrete conjectures on the knot exterior $M=\mathcal{M}\backslash \nu(\mathcal{K})$ deduced from the twisted indices, the partition functions of $\mathcal{T}_{N=2}[\mathcal{M},\mathcal{K}]$ placed on curved backgrounds $\Sigma_g \times S^1$. 
 Here $\nu(\mathcal{K})$ denotes a tubular neighborhood of $\mathcal{K}$ and $\Sigma_g$ is a Riemann surface of genus $g$.
 We give a particular focus on the conjecture for $g=0$ (see Conjecture \ref{conj:intro} below) which is most unexpected in the mathematical viewpoint.
 
 \begin{conjecture} \label{conj:intro} Let $M$ be a compact 3-manifold with a torus boundary whose interior admits a hyperbolic structure. Let $X^\mathrm{irr}(M)$ be the character variety of irreducible $\mathrm{SL}_2(\mathbb{C})$-representations.
 	Suppose that every irreducible component of $X^\mathrm{irr}(M)$ is of dimension 1. Then for any slope $\gamma \in H_1(\partial M; \mathbb{Z})$ we have
 	\[\sum_{[\rho] \in \mathrm{tr}_\gamma^{-1}(z)} \frac{1} {\mathrm{Tor}(M;\mathfrak{g}_\rho,\gamma) } =0\]
 	for generic $z \in \mathbb{C}$. Here $\mathrm{tr}_\gamma : X^\mathrm{irr}(M)\rightarrow \mathbb{C}$ is the trace function of $\gamma$ and $\mathrm{Tor}(M;\mathfrak{g}_\rho,\gamma)$ is the adjoint Reidemeister torsion with respect to $\rho$ and $\gamma$.
 \end{conjecture}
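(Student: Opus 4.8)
To prove the conjecture I would recast the sum as a residue computation on the smooth projective model of the character variety curve. Since every component of $X^{\mathrm{irr}}(M)$ is assumed to be $1$-dimensional, each component $X$ has a smooth projective model $\widehat{X}$ to which both $\mathrm{tr}_\gamma$ and $\mathrm{Tor}(M;\mathfrak{g}_\rho,\gamma)$ extend as rational functions. Fix a generic $z\in\mathbb{C}$ and, on each such $\widehat{X}$ on which $\mathrm{tr}_\gamma$ is non-constant, consider the meromorphic $1$-form
\[\Omega_{z}\;=\;\frac{1}{\mathrm{Tor}(M;\mathfrak{g}_\rho,\gamma)}\cdot\frac{d\,\mathrm{tr}_\gamma}{\mathrm{tr}_\gamma-z}\,.\]
For generic $z$ the fibre $\mathrm{tr}_\gamma^{-1}(z)$ consists of $\deg(\mathrm{tr}_\gamma)$ distinct points, all lying in $X^{\mathrm{irr}}(M)$ and none of them ramification points of $\mathrm{tr}_\gamma$, and at each of them $\mathrm{Res}\,\Omega_z=1/\mathrm{Tor}(M;\mathfrak{g}_\rho,\gamma)$. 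Applying the Residue Theorem on $\widehat{X}$ and summing over the finitely many relevant components, the conjectured identity becomes \emph{equivalent} to the assertion that, for every slope $\gamma$, the residues of $\Omega_z$ at all the remaining poles sum to zero; the remaining poles are the zeros of $\mathrm{Tor}(M;\mathfrak{g}_\rho,\gamma)$ and the ideal points, i.e.\ the points of $\widehat{X}\smallsetminus X^{\mathrm{irr}}(M)$, where $\mathrm{tr}_\gamma$ may have a pole.

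The second step disposes of the zeros of the torsion, using Porti's change-of-curve formula. Passing to the (at most double) cover on which the eigenvalue $x_\gamma$ of $\rho(\gamma)$ is a rational function, that formula produces a single meromorphic $1$-form $\omega_X$ on the model of this cover, independent of $\gamma$, with $\mathrm{Tor}(M;\mathfrak{g}_\rho,\gamma)\cdot\omega_X=d\log x_\gamma$ for every slope $\gamma$; in particular $\omega_X$ has neither zeros nor poles away from the ideal points, since any such zero or pole would be forced to cancel against one of $d\log x_\gamma$ and $\mathrm{Tor}(M;\mathfrak{g}_\rho,\gamma)$. Consequently the zeros of $\mathrm{Tor}(M;\mathfrak{g}_\rho,\gamma)$ outside the ideal points are exactly the ramification points of $x_\gamma$, hence (for generic $z$) of $\mathrm{tr}_\gamma$, and at such a point the simple zero of $d\,\mathrm{tr}_\gamma$ cancels the simple pole of $1/\mathrm{Tor}(M;\mathfrak{g}_\rho,\gamma)$, so $\Omega_z$ is holomorphic there. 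This reduces the whole problem to the ideal points. (Observe that $\omega_X$ need not be holomorphic --- on a component of genus $0$ it cannot be --- so the residue bookkeeping below is genuinely needed.)

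The decisive step is the local analysis at an ideal point $p$ of $\widehat{X}$. In a local parameter $s$ at $p$ the eigenvalues of the meridian and longitude behave like $x_\mu\sim s^{a_p}$ and $x_\lambda\sim s^{b_p}$ for integers $a_p,b_p$ (the Culler--Shalen / Neumann--Zagier boundary data of $p$), so $x_\gamma\sim s^{w_p(\gamma)}$ with $w_p(\gamma)=p'a_p+q'b_p$ linear in $\gamma=p'\mu+q'\lambda$. When $w_p(\gamma)\neq 0$ one has $\mathrm{tr}_\gamma\sim s^{-|w_p(\gamma)|}\to\infty$, and combining this with the order of $\omega_X$ at $p$ gives $\mathrm{Res}_p\,\Omega_z=-\,\mathrm{sgn}(w_p(\gamma))\,r_p$, where $r_p$ depends only on $\omega_X$ --- in particular not on $z$, which is exactly what one needs --- provided $\omega_X$ has at worst a simple pole at $p$, with additional terms when it has a higher-order pole; when $w_p(\gamma)=0$, i.e.\ when $\gamma$ is the boundary slope detected by $p$, the analysis is different because $\mathrm{tr}_\gamma$ stays finite there. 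It then remains to prove $\sum_p\mathrm{Res}_p\,\Omega_z=0$ for every slope $\gamma$. I expect this to follow by combining the Residue Theorem for $\omega_X$ itself with the balancing relations satisfied by the boundary data $\{(a_p,b_p)\}$ --- the relations encoded in the Newton polygon of the $A$-polynomial --- which together should force the $\mathrm{sgn}(w_p(\gamma))$-weighted residue sum to cancel uniformly in $\gamma$.

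The main obstacle is precisely this last step: there is at present no general local formula for the adjoint Reidemeister torsion --- equivalently for the torsion form $\omega_X$ --- at an ideal point of the character variety, expressed in terms of the essential surface the ideal point detects, and without such a formula the combinatorial identity among ideal-point data cannot be verified in general. This is why the conjecture can currently be established only when the character variety, its ideal points and the torsion are all explicitly computable, as for the figure-eight knot complement treated below. The physical derivation from the twisted index strongly suggests that the cancellation is robust, and it indicates what the missing local input should be.
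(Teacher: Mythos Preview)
Neither you nor the paper proves the general conjecture. The paper derives it from a non-rigorous physics argument (the vanishing of supersymmetric ground states at $g=0$, Section~3) and then establishes only the figure-eight case as Theorem~5.1; you set up a general framework and correctly identify the obstruction that blocks it. So the real comparison is between two residue schemes.

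The paper's argument for $4_1$ stays in coordinates on $(\mathbb{C}^\times)^2$. After computing the torsion explicitly via Fox calculus and Yamaguchi's formula, and applying the slope-change rule, it rewrites
\[
\frac{1}{\mathrm{Tor}(M;\mathfrak{g}_\rho,\gamma)} \;=\; 2\epsilon x\cdot\frac{m^2-m^{-2}}{ml\cdot\mathrm{Jac}_{(A,B)}(m,l)}
\]
with $A$ the $A$-polynomial and $B=m^pl^q-x$, and then invokes Khovanskii's global residue theorem for non-degenerate Laurent systems. Vanishing follows from two purely combinatorial checks: that the system $(A,B)$ is non-degenerate, and that the Newton polygon of $m^2-m^{-2}$ lies strictly inside $\Delta(A)+\Delta(B)$. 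There is no compactification, no torsion form $\omega_X$, and no ideal-point analysis; Khovanskii's hypotheses package all the behaviour at infinity into Newton-polygon combinatorics that are easy to verify by hand for $4_1$. Your route, by contrast, compactifies each component of the character curve and applies the one-variable residue theorem to $\Omega_z$. This is more intrinsic and would, if completed, treat all slopes and all hyperbolic knot exteriors uniformly, but it forces you to compute residues at ideal points, which is precisely the missing local torsion formula you flag. The two approaches are cousins: Khovanskii's non-degeneracy and Newton-polygon conditions are doing for the paper, in the explicit $4_1$ coordinates, what your ideal-point bookkeeping would have to do in general.

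One point in your second step deserves a caution. The assertion that $\omega_X$ has neither zeros nor poles off the ideal locus is not fully justified by the argument you sketch: at a point where $d\log x_\mu$ and $d\log x_\lambda$ vanish simultaneously, varying the slope $\gamma$ yields no new information and your cancellation argument breaks down. For $4_1$ the eigenvalue curve is smooth and this does not arise, but in general you would need either to exclude such points by an additional hypothesis or to invoke a sharper regularity statement for Porti's torsion form than the change-of-curve formula alone provides.
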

 
 We observe that Conjecture \ref{conj:intro} is deeply related to global residue theorem. In particular, we give a rigorous proof of Conjecture \ref{conj:intro} for the figure-eight knot exterior. 
 Note that this gives an infinite family of $\mathcal{T}_{N=2}[\mathcal{M},\mathcal{K}]$ (obtained by varying a slope $\gamma$) where our physical arguments used to derive the conjecture are mathematically supported.
We also present numerical verification to provide non-trivial consistency checks for the assumptions in  Conjecture \ref{conj:intro}.
In particular, we emphasize
that all irreducible components of $X^\mathrm{irr}(M)$ (not just the geometric component) should be taken into account in the 3D-3D relation. Otherwise, the twisted index at $g=0$ may not be even an integer (see Section \ref{sec:knot74} and Remark \ref{rmk:allcomponent}).

 The paper is organized as follows.
 In Section \ref{sec:3D-3D}, we give a general survey on 3D-3D correspondence. 
 In Section \ref{sec: conjecture from physics}, we focus on the 3D-3D relation for the twisted indices which leads Conjecture \ref{conj:intro}. 	
 In Section \ref{sec:def}, we recall some definitions and properties of adjoint Reidemeister torsion. 
 In Section \ref{sec:proof}, we prove the conjecture for the figure-eight knot exterior.
 We discuss some further directions in Section \ref{sec:further}.


\section{A brief  survey on 3D-3D correspondence} \label{sec:3D-3D}
The 3D-3D correspondence relates 3D suppersymmetric quantum field theories (SQFTs) and mathematcis of 3-manifolds.
The correspondence can be understood from the physics of M5-branes, 6-dimensional extended objects in M-theory.
Precisely, we consider the low-energy world-volume theory of multiple $N$ M5-branes called the 6D $A_{N-1}$ $(2,0)$ theory.
\begin{align}
\begin{split}
&(\textrm{6D $A_{N-1}$ (2,0) theory}) 
\\
&=(\textrm{low-energy world-volume theory of coincident $N$ M5-branes}).
\end{split}
\nonumber
\end{align}
Through a twisted compactification of the 6D theory along a closed 3-manifold $\mathcal{M}$ with a defect along a knot $\mathcal{K} \subset \mathcal{M}$, we can geometrically engineer a 3D SQFT $\mathcal{T}_N[\mathcal{M},\mathcal{K}]$ labeled by a pair $(\mathcal{M},\mathcal{K})$ and $N\geq2$.
\begin{align}
\begin{split}
\mathcal{T}_N[\mathcal{M},\mathcal{K}]&:= (\textrm{3D theory obtained from a twisted compactification of }
\\
&\textrm{6D $A_{N-1}$ (2,0) theory along $\mathcal{M}$ with a regular maximal defect along $\mathcal{K}$}).
\end{split}
\nonumber
\end{align}
To preserve some supersymmetries, we perform a partial topological twisting using the usual $SO(3)$ subgroup of $SO(5)$ R-symmetry of the 6D theory.
Then the resulting 3D theory $\mathcal{T}_{N}[\mathcal{M},\mathcal{K}]$ has 3D $\mathcal{N}=2$ superconformal symmetry with $su(N)$ flavor symmetry associated to the maximal regular defect. An explicit field-theoretic construction of  $\mathcal{T}_N[\mathcal{M},\mathcal{K}]$ was proposed in \cite{Dimofte:2011ju,Dimofte:2013iv} using an ideal triangulation of the knot complement.
One interesting aspect of the construction is that partition functions (ptns) of $\mathcal{T}_N[\mathcal{M},\mathcal{K}]$ on supersymmetric (SUSY)  curved backgrounds $\mathbb{B}$ give topological invariants of $(\mathcal{M}, \mathcal{K})$.
\begin{align}
\begin{split}
\textrm{3D-3D relation : }  &(\textrm{SUSY ptns of $\mathcal{T}_{N}[\mathcal{M},\mathcal{K}]$ on $\mathbb{B}$})
\\
&= (\textrm{topological invariants $\mathcal{I}_N (\mathcal{M}, \mathcal{K},\mathbb{B})$}).
\end{split}
\nonumber
\end{align}

During the last decades, physicists have studied various supersymmetric backgrounds $\mathbb{B}$ and developed techniques for computing the SUSY ptn on $\mathbb{B}$ using so-called localization technique (see \cite{Willett:2016adv} for a review).  For $\mathbb{B} = S^2\times_q S^1$ the corresponding SUSY ptn is called a superconformal index \cite{Kim:2009wb} where $q$ denotes the Omega-deformation parameter on $S^2\times S^1$. 
For $\mathbb{B} = S^3_b/\mathbb{Z}_k$ the corresponding SUSY ptn is called a squashed Lens space partition function \cite{Hama:2011ea} where $b$ denotes the squashing (Omega-deformation) parameter. For the above cases, the 3D-3D relation is known as follows.
\begin{align}
\begin{split}
&(\textrm{$S^3_b/\mathbb{Z}_k$ partition function of $\mathcal{T}_{N}[\mathcal{M},\mathcal{K}]$}) 
\\
& \qquad =  (\textrm{State-integral model of $\mathrm{SL}_N(\mathbb{C})$ theory of level $k$ \cite{Dimofte:2014zga}});
\\
&(\textrm{Superconformal index of $\mathcal{T}_{N}[\mathcal{M},\mathcal{K}]$})
\\
& \qquad =  (\textrm{3D index \cite{Dimofte:2011py,Garoufalidis:2016ckn,Dimofte:2013iv}} ).
\end{split}
\nonumber
\end{align}

Another interesting SUSY partition function which has not been explored seriously  in the context of 3D-3D correspondence until quite recent days  is a twisted index \cite{Benini:2015noa,Benini:2016hjo,Closset:2016arn,Gukov:2015sna,Gukov:2016gkn}, the SUSY ptn on  $ \mathbb{B}=\Sigma_g\times S^1$. Here $\Sigma_g$ is a closed Riemann surface of genus $g$. Let
\begin{align}
\mathcal{I}_N (\vec{x};\mathcal{M}, \mathcal{K}, g )= (\textrm{Twisted index of $\mathcal{T}_{N}[\mathcal{M},\mathcal{K}]$ on $\Sigma_g \times S^1$}) \label{twistedindex}
\end{align}
where $\vec{x}  = \{ x_i\}_{i=1}^{N-1}$  are fugacities for $(N-1)$ Cartan generators of the $su(N)$ flavor symmetry. 
When $N=2$, the precise 3D-3D relation  for the twisted index is given as follow \cite{Gang:2018hjd,Gang:2019uay}:
\begin{align}
\mathcal{I}_{N=2}(x;\mathcal{M},\mathcal{K},g) =  \mkern-20mu \sum_{[\rho] \in\mathrm{tr}_\gamma^{-1}(x+x^{-1}) }  \mkern-30mu \left( d_\gamma \cdot \textrm{Tor}(M;\mathfrak{g}_\rho, \gamma) \right)^{g-1}
\label{3d-3d relation}
\end{align}
for generic  $x \in \mathbb{C}^\times$ where 
\begin{align}
\begin{split}
&\bullet \textrm{$M$ is the knot exterior $\mathcal{M} \backslash \nu(\mathcal{K})$;}
\\
&\bullet\textrm{$\gamma \in H_1(\partial M;\mathbb{Z})$ is a cycle representing a meridian of $\mathcal{K}$;}
\\
&\bullet\textrm{$\mathrm{tr}_\gamma : X^\mathrm{irr}(M) \rightarrow \mathbb{C},\  [\rho] \mapsto \mathrm{tr}(\rho(\gamma))$ is the trace function of $\gamma$;}
\\
&\bullet\textrm{$d_\gamma = \begin{cases} 
1 \quad \textrm{if $\gamma \in \textrm{Ker}(i_\ast : H_1 (\partial M;\mathbb{Z}) \rightarrow H_1 (M;\mathbb{Z}/2\mathbb{Z})) $} \\ 2 \quad \textrm{otherwise}.
    \end{cases}$}
\end{split}
\label{eqn:notation}
\end{align}
Here $\mathrm{Tor}(M;\mathfrak{g}_\rho,\gamma)$ is the adjoint Reidemeister torsion with respect to $\rho$ and $\gamma$. We refer to Section \ref{sec:def} for the precise definition.
An interesting point is that there is no other topological quantity  appeared in the relation \eqref{3d-3d relation} except the adjoint Reidemeister torsion, which is nothing but the 1-loop perturbative invariant of $\mathrm{SL}_2(\mathbb{C})$ Chern-Simons theory.

\section{Ground state counting of  wrapped M5-branes} \label{sec: conjecture from physics}
 

 The 3D twisted index \eqref{twistedindex} can be also viewed as the Witten index \cite{Witten:1982df} for the 1D supersymmetric quantum mechanics (SQM)
\begin{align*}
\begin{split}
&\mathcal{T}_N[\mathcal{M}\times \Sigma_g,\mathcal{K}]:= (\textrm{1D SQM obtained from a twisted compactification of}
\\
&\qquad \textrm{the 6D $A_{N-1}$ (2,0) theory along $\mathcal{M}\times \Sigma_g$ with a regular defect along $\mathcal{K}\times \Sigma_g$}). \end{split}
\end{align*}
We hereafter restrict our attention to $N=2$ and omit the subscript $N$ for simplicity.
The symmetries of $\mathcal{T}[\mathcal{M}\times \Sigma_g;\mathcal{K}]$ are 
\begin{align} 
\label{symmetries}
\begin{split} 
&\bullet \textrm{Time translation},
\\
&\bullet\textrm{2  supercharges $Q$s :  $16$  $Q$s are broken to 2 $Q$s by  topological twisting},
\\
&\bullet\textrm{$SO(2)$ R-symmetry :  $SO(5)$  is broken to $SO(2)$ by topological twisting},
\\
&\bullet\textrm{$su(2)$  symmetry :  flavor symmetry associated to the knot $\mathcal{K} \subset \mathcal{M}$}. 
\end{split}
\end{align}
Here the symbol $su(2)$ could be either $SO(3)$ or $SU(2)$. The global structure of the $su(2)$ symmetry of $\mathcal{T}[\mathcal{M}\times \Sigma_g;\mathcal{K}]$ is determined by the following criterion \cite{Gang:2018wek}.
\begin{align*}
su(2)  =  \begin{cases} SU(2) \quad \textrm{if $\gamma \in \textrm{Ker}\: i_* $} \\ SO(3) \quad \textrm{otherwise.} 
\end{cases}
\end{align*}

%
The Witten index \cite{Witten:1982df} of $\mathcal{T}[\mathcal{M}\times \Sigma_g;\mathcal{K}]$ is given by
\begin{align}\label{eqn:witten}
\mathcal{I}(x;\mathcal{M}, \mathcal{K}, g):= \textrm{Tr}_{\mathcal{H} [\mathcal{M}\times \Sigma_g;\mathcal{K}]}\; e^{- \beta \hat{H}}(-1)^{\hat{R}}  x^{\hat{T}}.
\end{align}
Here the trace is taken over  an infinite dimensional vector space
\begin{align*}
\mathcal{H} [\mathcal{M}\times \Sigma_g;\mathcal{K}]\;: \; \textrm{Hilbert-space of $\mathcal{T}[\mathcal{M}\times \Sigma_g;\mathcal{K}]$}
\end{align*}
and the Noether charge operators $\hat{H}, \hat{R}$, and $\hat{T}$ associated to the symmetries in \eqref{symmetries} act on $\mathcal{H} [\mathcal{M}\times \Sigma_g;\mathcal{K}]$ as mutually commuting self-adjoint operators.
\begin{align*}
\begin{split}
&\bullet \hat{H} \;: \; \textrm{Energy}
\\
&\bullet \hat{R}\; :\; \textrm{$SO(2)=U(1)$ R-symmetry charge}
\\
&\bullet \hat{T} \;:\;  \textrm{A cartan of  $su(2)$ flavor symmetry}
\end{split}
\end{align*}
We choose a normalization of Cartan generators $R$ and $T$ of $U(1)$ and $su(2)$ respectively as follows.
\begin{align*}
\begin{split}
&i R = i \; {\rm Id}_{1\times 1} \in {\mathfrak u}(1)
\\
& i T = i  \begin{pmatrix} 
1 & 0 \\
0 & -1 
\end{pmatrix}   \in \mathfrak{su}(2)
\end{split}
\end{align*}
Note that the operators $\hat{R}$ and $\hat{T}$ act linearly and their eigenvalues are
\begin{align}
\begin{split}
&(\textrm{Eigenvalues of $\hat{H}$}) \in \mathbb{R}_{\geq 0}, \quad (\textrm{Eigenvalues of $\hat{R}$}) \in \mathbb{Z},\\
& (\textrm{Eigenvalues of $\hat{T}$}) \in   \begin{cases} \mathbb{Z} & \textrm{if $\gamma \in \textrm{Ker}\: i_* $}\quad \\ 2 \mathbb{Z} & \textrm{otherwise.} 
\end{cases}
\end{split} \label{Spectrum of Noether charges}
\end{align}
 On the Hilbert-space $\mathcal{H}[\mathcal{M}\times \Sigma_g;\mathcal{K}]$, there are also Grassmannian odd supercharge operators, $\hat{Q}$ and its adjoint $\hat{Q}^\dagger$, satisfying the (anti)-commutation relations:
\begin{align*}
\begin{split}
&\hat{Q} \hat{Q}^\dagger + \hat{Q}^\dagger \hat{Q} = 2\hat{H}\;, \quad [\hat{Q},\hat{T}] = [\hat{Q}^\dagger, \hat{T}] = \hat{Q}^2  = 0\;,
\\
& [\hat{R},\hat{Q}] = \hat{Q}\;, \quad [\hat{R}, \hat{Q}^\dagger] = - \hat{Q}^\dagger\;.
\end{split}
\end{align*}

As the usual, the index \eqref{eqn:witten} does not depend on $\beta$, since there is no contribution other than supersymmetric ground states $(\hat{H}=0)$. 
More precisely, this is due to the cancellation between two states
\begin{align*}
|E, R,T \rangle  \quad \textrm{and } \quad \sqrt{\frac{2}{E}}\hat{Q}|E,R,T\rangle
\end{align*}
for $E\neq 0$ where $|E,R,T\rangle$ is a normalized simultaneous eigenstate of $(\hat{H},\hat{R},\hat{T})$ with eigenvalues $(E,R,T)$ respectively.  The second state is also  a normalized simultaneous eigenstate  with eigenvalues $(E,R+1,T)$.  
Therefore, the index \eqref{eqn:witten} counts the ground states of $\mathcal{T}[\mathcal{M}\times \Sigma_g;\mathcal{K}]$ with signs, i.e.
\begin{align}
    \mathcal{I}(x;\mathcal{M}, \mathcal{K} , g):= \textrm{Tr}_{\mathcal{H}^{E=0} [\mathcal{M}\times \Sigma_g;\mathcal{K}]}\; (-1)^{\hat{R}}  x^{\hat{T}}
    \label{eqn:ground}
\end{align}
where
\begin{align*}
 \mathcal{H}^{E=0} [\mathcal{M}\times \Sigma_g;\mathcal{K}]  := \left\{|\psi \rangle \in \mathcal{H} [\mathcal{M}\times \Sigma_g;\mathcal{K}] \;: \; \hat{H}|\psi \rangle =0     \right\}.
\end{align*}
Note that the condition $\hat{H}|\psi \rangle =0$ is equivalent to $\hat{Q}|\psi \rangle=0$.
Under the assumption that every irreducible component of $X^\mathrm{irr}(M)$ is of dimension 1, we expect that the number of  ground states  (= \textrm{dim} $\mathcal{H}^{E=0} [\mathcal{M}\times \Sigma_g;\mathcal{K}]$) is finite. It follows that (using the fact \eqref{Spectrum of Noether charges})
\begin{align}
\mathcal{I}(x;\mathcal{M}, \mathcal{K},g)   \in   \begin{cases} \mathbb{Z}[x+x^{-1}] &\textrm{if $\gamma \in \textrm{Ker}\;i_* $}  \\ \mathbb{Z}[x^2+x^{-2}] & \textrm{otherwise}.
\end{cases} \label{3d Index as Laurent series}
\end{align}
 Combining the above with the 3D-3D relation \eqref{3d-3d relation}, we obtain a non-trivial 
prediction: (for simplicity we substitute $x+x^{-1}$ by $z$)
  \begin{conjecture} Let $M$ be a compact 3-manifold with a torus boundary.	Suppose that every irreducible component of $X^\mathrm{irr}(M)$ is of dimension 1. Then for any slope $\gamma \in H_1(\partial M; \mathbb{Z})$ we have
 	\[
    \sum_{[\rho] \in \mathrm{tr}_\gamma^{-1}(z)} \mkern-10mu\left( d_\gamma  \cdot \textrm{Tor}(M;\mathfrak{g}_\rho, \gamma) \right)^{g-1}   \in   \begin{cases} \mathbb{Z}[z] & \textrm{if $\gamma \in\textrm{Ker}\;i_* $} \\ \mathbb{Z}[z^2] & \textrm{otherwise}
 	\end{cases}\]
 	for generic $z \in \mathbb{C}$. See the notation \eqref{eqn:notation}.
 \end{conjecture}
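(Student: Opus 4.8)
The statement is, within the physical derivation, immediate: substituting $z=x+x^{-1}$ into the 3D-3D relation \eqref{3d-3d relation} identifies the left-hand side with the twisted index $\mathcal{I}_{N=2}(x;\mathcal{M},\mathcal{K},g)$, and the asserted membership is then exactly \eqref{3d Index as Laurent series}; a mathematical proof must instead reconstruct \eqref{3d Index as Laurent series} intrinsically. The plan is to study, for generic $z\in\mathbb{C}$ and a formal variable $w$, the monic polynomial
\[
P_z(w)\;:=\;\prod_{[\rho]\in\mathrm{tr}_\gamma^{-1}(z)}\bigl(w-d_\gamma\,\mathrm{Tor}(M;\mathfrak{g}_\rho,\gamma)\bigr),
\]
whose $w$-coefficients are, up to sign, the elementary symmetric functions of the torsions over the fiber. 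For $g\ge 1$ the sum $\sum_{[\rho]}\bigl(d_\gamma\,\mathrm{Tor}(M;\mathfrak{g}_\rho,\gamma)\bigr)^{g-1}$ is the $(g-1)$-st power sum of the roots of $P_z$, so by Newton's identities it lies in $\mathbb{Z}[z]$ (resp.\ $\mathbb{Z}[z^2]$) as soon as $P_z$ has coefficients in $\mathbb{Z}[z]$ (resp.\ $\mathbb{Z}[z^2]$); the case $g=1$ is trivial anyway (the sum equals $\#\,\mathrm{tr}_\gamma^{-1}(z)$, constant for generic $z$), and the case $g=0$ is Conjecture~\ref{conj:intro}, where the sum in fact vanishes. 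Thus everything reduces to proving $P_z\in\mathbb{Z}[z][w]$, with the sharper $P_z\in\mathbb{Z}[z^2][w]$ when $\gamma\notin\mathrm{Ker}\,i_*$.

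The algebraicity is routine: each $1$-dimensional component $X_j\subset X^{\mathrm{irr}}(M)$ admits a smooth projective model $\widetilde{X}_j$ to which $\mathrm{tr}_\gamma$ extends as a finite morphism $\overline{\mathrm{tr}}_\gamma\colon\widetilde{X}_j\to\mathbb{P}^1$, and $\rho\mapsto\mathrm{Tor}(M;\mathfrak{g}_\rho,\gamma)$ extends to a rational function on $\widetilde{X}_j$ that is nonzero and finite at every $\gamma$-regular character (as recalled in Section~\ref{sec:def}); since a generic fiber of $\mathrm{tr}_\gamma$ consists of such characters and misses the finitely many reducible and non-$\gamma$-regular characters as well as all ideal points, the $w$-coefficients of $P_z$ are pushforwards of rational functions along a finite map to $\mathbb{P}^1$, hence $P_z\in\mathbb{C}(z)[w]$. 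The substance is to show these coefficients are \emph{polynomials} in $z$: a priori they could have poles at the finitely many values $z_0$ lying under a non-$\gamma$-regular character or an ideal point, and ruling this out is a global residue statement, which I would establish along the lines of the figure-eight computation of Section~\ref{sec:proof}, rewriting each coefficient by means of the change-of-curve formula for the adjoint torsion so that the cancellation becomes visible. The parity refinement is then cheap: precisely when $\gamma\notin\mathrm{Ker}\,i_*$ there is a homomorphism $\epsilon\colon\pi_1(M)\to\{\pm 1\}$ with $\epsilon(\gamma)=-1$, and $\rho\mapsto\epsilon\rho$ preserves irreducibility and the adjoint representation --- hence $\mathrm{Tor}(M;\mathfrak{g}_\rho,\gamma)$ --- while sending $\mathrm{tr}_\gamma\mapsto-\mathrm{tr}_\gamma$, giving a torsion-preserving bijection $\mathrm{tr}_\gamma^{-1}(z)\leftrightarrow\mathrm{tr}_\gamma^{-1}(-z)$, so that $P_z(w)=P_{-z}(w)$.

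What remains is integrality of these coefficients, and this is the main obstacle. Galois invariance of a generic fiber, together with the fact that the adjoint torsion is a rational function on the character variety defined over $\mathbb{Q}$, places the coefficients of $P_z$ in $\mathbb{Q}[z]$ for free; clearing the denominators to reach $\mathbb{Z}[z]$ is genuinely hard. On the physics side it is automatic, since $\mathcal{I}(x;\mathcal{M},\mathcal{K},g)$ literally counts the finite set of ground states of $\mathcal{T}[\mathcal{M}\times\Sigma_g;\mathcal{K}]$ with signs and with $x$-weights in $\mathbb{Z}$ (or $2\mathbb{Z}$); mathematically one seems to need an integral --- say resultant- or elimination-theoretic --- model of the pair $\bigl(X^{\mathrm{irr}}(M),\,\mathrm{Tor}(\cdot,\gamma)\bigr)$ over $\mathbb{Z}$, which is delicate because the torsion is itself a ratio of determinants, and I would expect it to proceed in the spirit of the integrality of the $A$-polynomial. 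This integrality, together with the uniform control of the ideal points of \emph{all} components of $X^{\mathrm{irr}}(M)$ required for the residue cancellation, is where I expect the real difficulty to lie --- which is presumably why the result is recorded here only as a conjecture.
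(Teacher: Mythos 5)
First, a point of comparison: the paper does not prove this statement at all --- it is recorded as a conjecture, and its only justification there is the physical one you summarize in your opening sentence (the twisted index is a signed trace of $(-1)^{\hat R}x^{\hat T}$ over a finite-dimensional ground-state space with $\hat T$-eigenvalues in $\mathbb{Z}$ or $2\mathbb{Z}$, combined with the 3D-3D relation \eqref{3d-3d relation}). So there is no ``paper proof'' to match; any mathematical argument is necessarily a different route. Within that understanding, several pieces of your reduction are sound and worth keeping: the passage from power sums to elementary symmetric functions via Newton's identities correctly reduces all $g\ge 2$ to the single statement $P_z\in\mathbb{Z}[z][w]$; the $g=1$ case is indeed trivial; and the parity refinement via twisting by a character $\epsilon\colon\pi_1(M)\to\{\pm1\}$ with $\epsilon(\gamma)=-1$ is correct and clean ($\mathrm{Ad}_{-A}=\mathrm{Ad}_A$, so the adjoint chain complex and hence the torsion are literally unchanged, while $\mathrm{tr}_\gamma\mapsto-\mathrm{tr}_\gamma$; the existence of $\epsilon$ when $\gamma\notin\mathrm{Ker}\,i_*$ follows from universal coefficients). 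This last point is a genuine mathematical explanation of the $\mathbb{Z}[z^2]$ dichotomy that the paper only accounts for through the $SU(2)$-versus-$SO(3)$ global structure of the flavor symmetry.

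That said, as a proof the proposal has gaps that you yourself flag, and they are exactly where the content lies, so the statement remains unproven. Concretely: (i) polynomiality of the coefficients of $P_z$ --- ruling out poles over the finitely many trace values lying under non-$\gamma$-regular characters and under ideal points of the smooth projective models --- is asserted to follow ``along the lines of Section~\ref{sec:proof},'' but the argument there (Khovanskii's residue theorem applied to the pair $(A,B)$) is a computation specific to $4_1$ and controls a single reciprocal power sum, not the elementary symmetric functions of the torsions; no mechanism is given for general $M$. (ii) Integrality over $\mathbb{Z}$ rather than $\mathbb{Q}$ is conceded to be open; note also that Galois invariance only gives $\mathbb{Q}$-coefficients if the components of $X^{\mathrm{irr}}(M)$ are permuted by $\mathrm{Gal}(\overline{\mathbb{Q}}/\mathbb{Q})$ compatibly with the torsion function, which needs an argument (the torsion is defined via a $\mathbb{Z}$-chain complex, so this should be fine, but it is not automatic from what you wrote). (iii) The $g=0$ case is deferred to Conjecture~\ref{conj:intro}, which the paper proves only for the figure-eight knot; moreover Conjecture~\ref{conj:intro} assumes hyperbolicity while the present statement does not, so even granting it the $g=0$ case is not fully covered. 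In short: a correct and useful reduction of the conjecture, honestly presented as such, but not a proof --- which is consistent with the paper leaving the statement as a conjecture supported only by the ground-state counting \eqref{3d Index as Laurent series}.
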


	\begin{remark}
			The geometrical choice of $(\mathcal{M},\mathcal{K})$ determines and is determined by the other pair $(M, \gamma)$ via Dehn filling and drilling out. 
			In Sections \ref{sec:def} and \ref{sec:proof},
			we use the latter pair in order to follow mathematical conventions.
	\end{remark}
\subsection{Conjecture for $g=0$ : no SUSY  ground state } \label{sec : no ground state}
We  claim that if the knot complement $\mathcal{M}\backslash\mathcal{K}$ (or equivalently the interior of $M$) admits a hyperbolic structure,  then
\begin{align} \label{eqn:empty}
\begin{split}
&\mathcal{H}^{E=0}[\mathcal{M}\times \Sigma_g;\mathcal{K}] =(\textrm{empty}) \quad \textrm{for } g=0.
\end{split}
\end{align}
As a consequence, we obtain Conjecture \ref{conj:intro} from the relations \eqref{3d-3d relation} and \eqref{eqn:ground}.

A physical argument for the claim \eqref{eqn:empty} is given as follows. We may consider general $N \geq 2$.
The Hilbert-space describes  supersymmetric ground states of the 1D quantum mechanical system obtained from a twisted compactification of 6D $A_{N-1}$ theory on $\mathcal{M}\times \Sigma_g$ with a maximal regular defect along $\mathcal{K} \subset \mathcal{M}$.  The 3D-3D relation was derived by studying the twisted index on $\Sigma_g$ of the 3D theory $\mathcal{T}_{N}[\mathcal{M},\mathcal{K}]$. Alternatively, the ptn can be considered as the twisted index on $M$ of the 4D theory $\mathcal{T}_N[\Sigma_g]$  \cite{Gaiotto:2008cd}. The 4D theory is defined as the low energy effective theory of the twisted compactification of 6D $A_{N-1}$ theory on $\Sigma_g$.   The classical vacuum moduli space of the 4D theory is given as the solutions of generalized $\mathrm{SL}_N(\mathbb{C})$ Hitchin's equations coupled to a real adjoint scalar $\sigma$  on $\Sigma_g$ \cite{Yonekura:2013mya}. When $g=0$, there is only the trivial flat connection on $\Sigma_g = S^2$ and the Hitchin moduli space is a point. On the point, there is $(N-1)$-dimensional vacuum moduli (say reducible branch) space parameterized by the real scalar field $\sigma$.   Physically, the moduli describes the dynamics of totally separated $N$ M5-branes. As argued in \cite{Gang:2018wek}, the reducible branch  is separated from the other branch (say irreducible branch) of vacuum moduli space  in the compactification of the  6D theory on the hyperbolic 3-manifold $M$. The 3D-3D relation \eqref{3d-3d relation} was derived for the 3D theory sitting on the  irreducible branch. After siting on the irreducible branch, the reducible vacua disappear and there is no remaining classical vacua for $g=0$. This implies the claim \eqref{eqn:empty}.

\section{The Reidemeister torsion for a knot exterior} \label{sec:def}

\def\Zbb{\mathbb{Z}}
\def\Fbb{\mathbb{F}}
\def\Rbb{\mathbb{R}}
\def\Cbb{\mathbb{C}}

We devote this section to briefly recall basic definitions and known results for the sign-refined Reidemeister torsion. We mainly follow \cite{porti1997torsion, yamaguchi2008relationship,turaev2012torsions}.

\subsection{Definitions}

\subsubsection{Torsion of a chain complex} \label{sec:chaincplx}

Let $\Fbb$ be a field.
For an $\Fbb$-vector space with two (ordered) bases $c$ and $c'$ we denote by $[c'/c] \in \Fbb^\times$ the determinant of the transition matrix taking $c$ to $c'$.

Let $(0 \rightarrow C_n \rightarrow \cdots \rightarrow C_0\rightarrow 0)$ be a chain complex of $\Fbb$-vector spaces with boundary maps $\partial_i : C_i \rightarrow C_{i-1}$.
Let $c_i$ be a basis of $C_i$ and $h_i$ be a basis of the $i$-th homology $H_i(C_\ast)$.
We choose $b_i \subset C_i$ such that $\partial_i b_i$ is a basis of $\mathrm{Im} \mkern1mu \partial_{i} \subset C_{i-1}$ and choose a representative $\widetilde{h}_i$ of $h_i$ in $\mathrm{Ker}\mkern1mu \partial_i \subset C_i$. 
It follows from the short exact sequences
\begin{equation*}
		0\rightarrow \mathrm{Ker}\mkern1mu \partial_i \rightarrow C_i  \overset{\partial_i}{\rightarrow} \mathrm{Im}\mkern1mu \partial_i \rightarrow 0 \quad \textrm{and}\quad 
		0 \rightarrow \mathrm{Im}\mkern1mu \partial_{i+1} \rightarrow \mathrm{Ker}\mkern1mu \partial_i\rightarrow H_i(C_\ast)\rightarrow 0
\end{equation*}
that the collection $\partial_{i+1} b_{i+1} \sqcup \widetilde{h}_i \sqcup b_i$ is a basis of $C_i$.
The \emph{sign-refined Reidemeister torsion} (\cite{turaev1986reidemeister}) is defined by
\begin{equation}\label{eqn:torsion}
    \mathrm{Tor}(C_\ast,c_\ast, h_\ast) =  (-1)^{|C_\ast|}\prod_{i=0}^n \left[ (\partial_{i+1} b_{i+1} \sqcup \widetilde{h_i} \sqcup b_i)/c_i \right]^{(-1)^i} \in \Fbb^\times
\end{equation}
where the symbol $|C_\ast|= \sum_{i=0}^n \big(\sum_{j=0}^i \dim C_j\big) \cdot \big(\sum_{j=0}^i \dim H_j(C_\ast)\big) $. 
It is known (see e.g. \cite{turaev1986reidemeister,turaev2012torsions}) that $\mathrm{Tor}(C_\ast,c_\ast,h_\ast)$ does not depend on the auxiliaries choices of $b_\ast$ and $\widetilde{h}_\ast$.

\subsubsection{Torsion of a CW-complex} \label{sec:cw}

Let $W$ be a finite CW-complex with a \emph{homology orientation} $\mathfrak{o}$,	an orientation of the $\Rbb$-vector space $H_\ast(W;\Rbb)$.
We enumerate the cells of $W$ by $c_i$ ($1 \leq i \leq m)$
and fix an orientation of each $c_i$ so that 
the cells $c_\ast$ form a basis of $C_\ast(W;\Rbb)$. 
We choose a basis $h_\ast$ of $H_\ast(W;\Rbb)$ respecting the orientation $\mathfrak{o}$ and let 
\begin{equation}\label{eqn:tau}
    \tau=\mathrm{sgn} \left( \mathrm{Tor}(C_\ast(W;\Rbb),c_\ast,h_\ast)\right) \in \{\pm1\}.
\end{equation}
The sign $\tau$ clearly depends on the orientation $\mathfrak{o}$, but not on a precise choice of the basis $h_\ast$.

Let $G=\mathrm{SL}_2(\Cbb)$ and $\mathfrak{g}$ be its Lie algebra.
A representation $\rho:\pi_1(W)\rightarrow G$ endows $\mathfrak{g}$ with 
a right $\Zbb[\pi_1 (W)]$-module structure: $v \cdot g = \mathrm{Ad}_{\rho(g^{-1})} (v)$  for $v \in \mathfrak{g}$ and $g \in \pi_1(W)$. 
We consider the chain complex
\[ C_*(W;\mathfrak{g}_\rho) = \mathfrak{g} \otimes_{\Zbb[\pi_1 W]} C_*(\widetilde{W};\Zbb)\]
where $\widetilde{W}$ is the universal cover of $W$ with the induced CW-structure, and denote by $H_\ast(W;\mathfrak{g}_\rho)$ its homology.

For each cell $c_i$ of $W$ we choose a lift $\widetilde{c}_i$ to $\widetilde{W}$ arbitrarily so that the set
\begin{equation*}
\mathcal{B} =\{ h \otimes \widetilde{c}_1, e \otimes \widetilde{c}_1,  f \otimes \widetilde{c}_1,  \cdots, \ h \otimes \widetilde{c}_m, e \otimes \widetilde{c}_m,  f \otimes \widetilde{c}_m \}
\end{equation*}
is a basis of $C_\ast (W;\mathfrak{g}_\rho)$. 
Here  $\{h,e,f\}$ is the usual basis of $\mathfrak{g}$ (see Remark \ref{rmk:basis_ind}). Letting $\boldsymbol{h}_\ast$ be a basis of $H_\ast(W;\mathfrak{g}_\rho)$, we define
 \begin{equation*} 
    \mathrm{Tor}(W;\mathfrak{g}_\rho, \boldsymbol{h}_\ast, \mathfrak{o}) :=\tau \cdot \mathrm{Tor}(C_\ast(W;\mathfrak{g}_\rho), \mathcal{B}, \boldsymbol{h}_\ast) \in \Cbb^\times.
\end{equation*}
 The torsion $\mathrm{Tor}(W;\mathfrak{g}_\rho, \boldsymbol{h}_\ast, \mathfrak{o})$ does not depend on the choice of the order and orientations of the cells $c_\ast$ (as they appear both in $\tau$ and $\mathcal{B}$) and the lifts $\widetilde{c}_i$ (as we are working on $\mathrm{SL}$). 
 It is also known that $\mathrm{Tor}(W;\mathfrak{g}_\rho,\boldsymbol{h}_\ast,\mathfrak{o})$ is invariant under conjugating $\rho$ and subdividing $W$; the sign $(-1)^{|C_\ast|}$ in the equation \eqref{eqn:torsion} is needed to ensure such invariance. 
 We refer to \cite{turaev2001introduction} for details.
 
\begin{remark} \label{rmk:basis_ind} It is known that if the Euler characteristic of $W$ is zero, the torsion
$\mathrm{Tor}(W;\mathfrak{g}_\rho,\boldsymbol{h}_\ast,\mathfrak{o})$  does not depend on the choice of the basis of $\mathfrak{g}$. We hereafter only consider knot exterior in $S^3$, so the basis of $\mathfrak{g}$ would not be essential.
\end{remark}

 \subsubsection{Torsion of a knot exterior} \label{sec:knotexterior}
 Let $K$ be an oriented hyperbolic knot in $S^3$ and $M$ be the knot exterior $S^3 \backslash \nu(K)$ with a fixed triangulation. We orient a meridian $\mu$ of $K$ by the right-hand screw rule and let a homology orientation $\mathfrak{o}$ of $M$ be the one induced from the basis
 $\{[\mathrm{pt}], [\mu]\} \subset H_\ast(M;\Rbb)=H_0(M;\Rbb) \oplus H_1(M;\Rbb)$. Here $\mathrm{pt}$ denotes a point in $M$.

A \emph{slope} $\gamma$ is an oriented simple closed curve in $\partial M$ with non-trivial class in $H_1(\partial M;\Zbb)$. An irreducible representation $\rho :\pi_1(M)\rightarrow G$ is called \emph{$\gamma$-regular} (\cite{porti1997torsion,yamaguchi2008relationship}) if:
 \begin{itemize}
     \item $\dim  H_1(M;\mathfrak{g}_\rho)=1$; 
     \item the inclusion $\gamma \hookrightarrow M$  induces an epimorphism $H_1(\gamma;\mathfrak{g}_\rho) \rightarrow H_1(M;\mathfrak{g}_\rho)$;
     \item if $\mathrm{tr} \left(\rho(\pi_1(\partial M)) \right)  \subset \{ \pm 2\}$, then $\rho(\gamma) \neq \pm\mathrm{Id}$.
\end{itemize} 
The orientation of $\gamma$ is not necessary here but is required in the construction below. Note that the notion of $\gamma$-regularity is invariant under conjugation, so the notion of an irreducible $\gamma$-regular character is well-defined.

For an irreducible $\gamma$-regular representation $\rho$ it is known that $\dim H_i(M;\mathfrak{g}_\rho)=1$ for $i= 1,2$, and $\dim H_i(M;\mathfrak{g}_\rho)=0$ for $i\neq 1,2$.
We fix a basis $\boldsymbol{h}_\ast=\{\boldsymbol{h}_1,\boldsymbol{h}_2\}$ of $H_\ast(M;\mathfrak{g}_\rho)$
by $\boldsymbol{h}_1 = v \otimes \widetilde{\gamma}$ and $\boldsymbol{h}_2 = v \otimes \widetilde{\partial M}$
where $v\in\mathfrak{g}$ is a non-zero vector  such that $\mathrm{Ad}_{\rho(g)}(v)=v$ for all $g \in \pi_1(\partial M)$.
Note that (i) such a vector $v$ is unique up to scaling, since $\rho(\pi_1(\partial M)) \not \subset \{\pm \mathrm{Id}\}$;
(ii) the tilde symbols $\widetilde{\gamma}$ and $\widetilde{\partial M}$ are used for lifts of $\gamma$ and $\partial M$ to the universal cover;
(iii) the boundary $\partial M$ is oriented by the convention ``the inward  normal vector in the last position''.
With above choices, we define
\[ \mathrm{Tor}(M;\mathfrak{g}_\rho,\gamma) := \mathrm{Tor}\left(M;\mathfrak{g}_\rho,\boldsymbol{h}_\ast, \mathfrak{o}\right) \in \Cbb^\times.\]
The torsion $\mathrm{Tor}(M;\mathfrak{g}_\rho,\gamma)$ does not depend on the choice of a triangulation of $M$ (as the torsion is invariant under subdivision) and a scaling of the vector $v$ (as it appears in both $\boldsymbol{h}_1$ and $\boldsymbol{h}_2$).
We refer to \cite[\S 3]{porti1997torsion} and \cite{yamaguchi2008relationship} for details.

\subsubsection{Changing a slope} \label{sec:changing}
Suppose that an irreducible representation  $\rho :\pi_1(M)\rightarrow G$  is both $\gamma$- and $\delta$-regular for slopes $\gamma$ and $\delta$.
It is known (see \cite[\S 3]{porti1997torsion}) that its character $[\rho]$ is contained in an 1-dimensional irreducible component, say $X$, of the algebraic set $X^\mathrm{irr}(M)=\{\pi_1(M)\rightarrow G : \mathrm{irreducible}\}/_{\mathrm{Conj}}$.
Let $u_\gamma$ and $u_\delta : X \rightarrow \Cbb$ be functions satisfying up to conjugation
\begin{equation} \label{eqn:fcnu}
\rho'(\gamma) = \begin{pmatrix} e^{u_\gamma([\rho'])} & \ast \\ 0 & e^{-u_\gamma([\rho'])} \end{pmatrix} 
\quad \mathrm{and} \quad
\rho'(\delta) = \begin{pmatrix} e^{u_\delta([\rho'])} & \ast \\ 0 & e^{-u_\delta([\rho'])} \end{pmatrix} 
\end{equation}
 for all characters $[\rho']\in X$. 
Under the assumption that both $u_\gamma$ and $u_\delta$ are holomorphic and non-singular at $[\rho]$, which is often the case,  ``slope changing rule'' follows  from \cite[Proposition 4.7]{porti1997torsion}:
\begin{equation}\label{eqn:basischange}
\mathrm{Tor}(M;\mathfrak{g}_\rho,\gamma) = \dfrac{\partial  u_\gamma}{\partial  u_\delta} \cdot \mathrm{Tor}(M;\mathfrak{g}_\rho,\delta)
\end{equation}
where the derivative is evaluated at $[\rho]$.

\subsection{Formulas for computing the torsion}

The torsion of a knot exterior is often computed by using the torsion polynomial (see e.g. \cite{yamaguchi2008relationship,dubois2009non,tran2014twisted}). It is computationally convenient as the torsion polynomial is obtained from a chain complex with trivial homology.

\subsubsection{Torsion polynomial}

Let $K\subset S^3$ be an oriented hyperbolic knot and $M=S^3 \backslash \nu(K)$.
Let $\rho:\pi_1(M)\rightarrow G$ be an irreducible representation and $\alpha : \pi_1(M)\rightarrow \Zbb$ be the abelianization map, counting the signed linking number with $K$.
We endow $\mathfrak{g}(t)=\Cbb(t)\otimes \mathfrak{g}$ with a right $\Zbb[\pi_1(M)]$-module structure: 
\[(p \otimes v)\cdot g =  t^{\alpha(g)}p\otimes \mathrm{Ad}_{\rho(g^{-1})}(v)\]for $p \otimes v \in \mathfrak{g}(t)$ and $g \in \pi_1(M)$. Here $\Cbb(t)$ denotes the field of rational functions in one variable $t$.

Let $\lambda$ be the canonical longitude of $K$ with the orientation same as $K$, and assume that $\rho$ is $\lambda$-regular. It is known (see \cite[Proposition 3.1.1]{yamaguchi2008relationship}) that the chain complex   
 \[C_\ast(M;\mathfrak{g}(t)_\rho)=\mathfrak{g}(t) \otimes_{\Zbb[\pi_1 M]} C_\ast(\widetilde{M};\Zbb)\]
 of $\Cbb(t)$-vector spaces is acyclic, i.e., its holomogy is trivial.
The \emph{torsion polynomial} is given by
 \[\mathrm{Tor}(M;\mathfrak{g}(t)_\rho) := \tau \cdot \mathrm{Tor}(C_\ast(M;\mathfrak{g}(t)_\rho), 1 \otimes \mathcal{B}, \emptyset) \in \Cbb(t)^\times.\]
We refer to Section \ref{sec:cw} for definitions of the sign $\tau$ and basis $\mathcal{B}$. 
Note that the torsion polynomial is defined up to $t^n (n\in\Zbb)$ due to the $\pi_1(M)$-action on $\Cbb(t)$.

In \cite{yamaguchi2008relationship} Yamaguchi proved that the torsion polynomial determines $\mathrm{Tor}(M;\mathfrak{g}_\rho,\lambda)$ as follows.
\begin{theorem}[\cite{yamaguchi2008relationship}] \label{thm:yamaguchi} The torsion polynomial $\mathrm{Tor}(M;\mathfrak{g}(t)_\rho)$ has  a simple zero at $t=1$ and
\begin{equation} \label{eqn:yamaguchi}  \mathrm{Tor}(M;\mathfrak{g}_\rho,\lambda)=-\left. \dfrac{d}{dt}\right|_{t=1} \mathrm{Tor}(M;\mathfrak{g}(t)_\rho).
\end{equation}
Note that the indeterminacy of $\mathrm{Tor}(M;\mathfrak{g}(t)_\rho)$ does not affect the equation \eqref{eqn:yamaguchi}.
\end{theorem}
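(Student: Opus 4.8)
The plan is to realize both torsions as specializations, at $t\to 1$, of a single chain complex over the discrete valuation ring $\mathcal{O}=\Cbb[t,t^{-1}]_{(t-1)}$, and then to extract the order of vanishing and the leading coefficient by elementary homological algebra over $\mathcal{O}$. Concretely, let $D_\ast:=C_\ast(M;\mathfrak{g}_{\mathcal{O},\rho})=\mathfrak{g}_{\mathcal{O}}\otimes_{\Zbb[\pi_1M]}C_\ast(\widetilde M;\Zbb)$, where $\mathfrak{g}_{\mathcal{O}}=\mathcal{O}\otimes\mathfrak{g}$ carries the module structure $(p\otimes w)\cdot g=t^{\alpha(g)}p\otimes\mathrm{Ad}_{\rho(g^{-1})}(w)$; it is a bounded complex of finitely generated free $\mathcal{O}$-modules with the evident $\mathcal{O}$-basis. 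Tensoring with the fraction field recovers $C_\ast(M;\mathfrak{g}(t)_\rho)$, acyclic by \cite[Proposition 3.1.1]{yamaguchi2008relationship}; tensoring with the residue field recovers $C_\ast(M;\mathfrak{g}_\rho)$, since $t^{\alpha(g)}\equiv 1\pmod{t-1}$. Acyclicity over $\Cbb(t)$ makes each $H_i(D_\ast)$ a finite-length $\mathcal{O}$-module.

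I would first determine the shape of $H_\ast(D_\ast)$ from the long exact sequence of $0\to D_\ast\xrightarrow{\,t-1\,}D_\ast\to C_\ast(M;\mathfrak{g}_\rho)\to 0$, namely $\cdots\to H_i(D_\ast)\xrightarrow{\,t-1\,}H_i(D_\ast)\to H_i(M;\mathfrak{g}_\rho)\to H_{i-1}(D_\ast)\to\cdots$, fed with $\dim H_1(M;\mathfrak{g}_\rho)=\dim H_2(M;\mathfrak{g}_\rho)=1$ and $H_i(M;\mathfrak{g}_\rho)=0$ otherwise (Section \ref{sec:knotexterior}). Inspecting degrees $0,1,2,\dots$ in turn yields $H_i(D_\ast)=0$ for $i\neq1$, and $H_1(D_\ast)\cong\mathcal{O}/(t-1)^n$ for some $n\ge1$, with $H_1(D_\ast)/(t-1)\xrightarrow{\sim}H_1(M;\mathfrak{g}_\rho)$. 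To see $n=1$ I use the peripheral data: since $\alpha(\lambda)=0$, the element $\widehat{\boldsymbol{h}}_1:=v\otimes\widetilde\lambda$ is already a cycle in $D_1$, and a direct computation of the twisted cellular boundary of the fundamental $2$-chain $v\otimes\widetilde{\partial M}$ of the torus $\partial M$, using $\alpha(\mu)=1$ and $\mathrm{Ad}_{\rho(\mu)}v=v$, gives $\partial_2(v\otimes\widetilde{\partial M})=\pm(t-1)\widehat{\boldsymbol{h}}_1$ in $D_1$. Because $\rho$ is $\lambda$-regular, the class of $\widehat{\boldsymbol{h}}_1$ modulo $(t-1)$ is the generator $\boldsymbol{h}_1$ of $H_1(M;\mathfrak{g}_\rho)$, hence by Nakayama $[\widehat{\boldsymbol{h}}_1]$ generates $H_1(D_\ast)\cong\mathcal{O}/(t-1)^n$; since $(t-1)\widehat{\boldsymbol{h}}_1$ is a boundary this forces $n=1$, so $\mathrm{Tor}(M;\mathfrak{g}(t)_\rho)$ vanishes to order exactly $1$ at $t=1$.

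To identify the leading coefficient I would evaluate $\mathrm{Tor}(C_\ast(M;\mathfrak{g}(t)_\rho),1\otimes\mathcal{B},\emptyset)$ straight from \eqref{eqn:torsion}, but with the auxiliary bases chosen over $\mathcal{O}$ and adapted to the integral model: take $b_3$ the top cells ($\partial_3^D$ is injective since $H_3(D_\ast)=0$), $b_1$ a splitting of $\partial_1^D$ (possible since $H_0(D_\ast)=0$), and $b_2$ a lift of an $\mathcal{O}$-basis of $\mathrm{Im}\,\partial_2^D$ containing $v\otimes\widetilde{\partial M}$ — legitimate precisely because $n=1$ makes $(t-1)\widehat{\boldsymbol{h}}_1$ primitive in $\mathrm{Im}\,\partial_2^D$. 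Reducing these choices modulo $(t-1)$ produces exactly a set of auxiliaries computing $\mathrm{Tor}(C_\ast(M;\mathfrak{g}_\rho),\mathcal{B},\boldsymbol{h}_\ast)$, with $v\otimes\widetilde{\partial M}$ now in the role of the homology representative $\boldsymbol{h}_2$ — the only discrepancy being in degree $1$, where the constructed basis of $\ker\partial_1$ contains $(t-1)\widehat{\boldsymbol{h}}_1$ in place of $\boldsymbol{h}_1$, contributing a single factor $(t-1)$. Dividing by $t-1$ and letting $t\to1$ then gives $\mathrm{Tor}(M;\mathfrak{g}(t)_\rho)=(t-1)w(t)$ with $w$ regular and nonzero at $1$, and $w(1)=\varepsilon\cdot\mathrm{Tor}(M;\mathfrak{g}_\rho,\lambda)$ for a universal sign $\varepsilon$; equivalently $\left.\frac{d}{dt}\right|_{t=1}\mathrm{Tor}(M;\mathfrak{g}(t)_\rho)=\varepsilon\cdot\mathrm{Tor}(M;\mathfrak{g}_\rho,\lambda)$.

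The genuinely delicate point is that $\varepsilon=-1$. This requires keeping track of every sign entering the comparison above: the sign in $\partial_2(v\otimes\widetilde{\partial M})=\pm(t-1)\widehat{\boldsymbol{h}}_1$ (which depends on the cellular/Fox-derivative conventions and on orienting $\partial M$ by ``inward normal last''), the reorderings of bases implicit in \eqref{eqn:torsion}, the factor $(-1)^{|C_\ast|}$ — trivial for the acyclic complex but not for $C_\ast(M;\mathfrak{g}_\rho)$ — and the chosen homology orientation $\mathfrak{o}$, induced by $\{[\mathrm{pt}],[\mu]\}$ with $\mu$ oriented by the right-hand screw rule. Assembling these consistently (an equivalent route is to stay over $\mathcal{O}$ and invoke the multiplicativity of torsion for the short exact sequence above, which repackages the same bookkeeping) is where the real work lies; the structural part of the argument is routine homological algebra over a DVR.
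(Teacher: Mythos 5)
First, a point of order: the paper does not prove Theorem \ref{thm:yamaguchi} at all --- it is imported verbatim from Yamaguchi's article --- so there is no in-house proof to compare yours against. Judged on its own terms, your strategy is sound and is essentially the standard route to results of this type (and close in spirit to Yamaguchi's actual argument): a single complex $D_\ast$ over the discrete valuation ring $\mathcal{O}=\mathbb{C}[t,t^{-1}]_{(t-1)}$ specializing to both torsions; the long exact sequence of $0\to D_\ast\xrightarrow{\,t-1\,}D_\ast\to C_\ast(M;\mathfrak{g}_\rho)\to 0$ forcing $H_i(D_\ast)=0$ for $i\neq 1$ and $H_1(D_\ast)\cong\mathcal{O}/(t-1)^n$ cyclic by Nakayama; and the relation $\partial(v\otimes\widetilde{\partial M})=\pm(t-1)\,(v\otimes\widetilde{\lambda})$, combined with $\lambda$-regularity (so that $v\otimes\widetilde{\lambda}$ generates $H_1$ mod $t-1$), pinning $n=1$. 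Those computations check out, and they do isolate exactly where $\lambda$-regularity and $\alpha(\mu)=1$, $\alpha(\lambda)=0$ enter.

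The gap sits inside the ``bookkeeping'' you defer to the last paragraph, and it is more than a sign: it is the difference between a simple zero and a simple pole. In your basis comparison the only degree where the acyclic and non-acyclic auxiliary bases genuinely differ is degree $1$, where $(t-1)\widehat{\boldsymbol{h}}_1$ replaces $\boldsymbol{h}_1$; but the degree-$1$ factor in \eqref{eqn:torsion} carries the exponent $(-1)^1=-1$, so carried out literally with that definition your construction yields $\mathrm{Tor}(M;\mathfrak{g}(t)_\rho)=(t-1)^{-1}w(t)$ with $w(1)=\pm\,\mathrm{Tor}(M;\mathfrak{g}_\rho,\lambda)$ --- the reciprocal of the claimed statement. (The same reciprocal ambiguity is visible in the abelian toy case: \eqref{eqn:torsion} with exponents $(-1)^i$ and the natural reading of $[c'/c]$ gives $(t-1)/\Delta_K(t)$, whereas the Wada--Kitano normalization underlying \eqref{eqn:foxtor}, and the explicit figure-eight formula in Section \ref{sec:41}, correspond to exponents $(-1)^{i+1}$ and give $\Delta_K(t)/(t-1)$.) So before the exponent of $(t-1)$ is determined you must fix the convention consistently with \eqref{eqn:foxtor}; your argument as written establishes ``order of vanishing $=\pm 1$ and leading coefficient $=\pm\,\mathrm{Tor}(M;\mathfrak{g}_\rho,\lambda)$'', which is strictly weaker than \eqref{eqn:yamaguchi}. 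Likewise the overall sign $\varepsilon=-1$, which is part of the content of the theorem (it makes the right-hand side of \eqref{eqn:yamaguchi} well defined against the homology orientation $\mathfrak{o}$ and the choice of $\boldsymbol{h}_2$), is asserted but never derived. These are exactly the points where the theorem's precise statement lives, so the proof is incomplete there even though the structural homological algebra is correct.
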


\subsubsection{Fox calculus} In \cite{kitano1996twisted} Kitano proved that the torsion polynomial agrees with the twisted Alexander invariant of $K$ with respect to $\mathrm{Ad}_\rho$. In particular, it can be computed in terms of the Fox differential calculus.

We first choose a finite group presentation of $\pi_1(M)$ of  deficiency 1  (for instance, the Wirtinger presentation)
\[\pi_1(M) = \langle g_1,\cdots,g_n | r_1,\cdots,r_{n-1} \rangle.\]
Let $W$ be the 2-dimensional CW-complex corresponding to the presentation.
Recall that $W$ has one 0-cell, $n$ 1-cells, and $(n-1)$ 2-cells.
Identifying the chain complex $C_\ast(W;\mathfrak{g}(t)_\rho)$ with
\[
    0 \rightarrow \mathfrak{g}(t)^{n-1}   \overset{\partial_2}{\longrightarrow} \mathfrak{g}(t)^{n} \overset{\partial_1}{\longrightarrow} \mathfrak{g}(t) \rightarrow 0,
\]
the boundary maps $\partial_1$ and $\partial_2$ are given by (see e.g. \cite{kitano1996twisted}, \cite[\S 3.4]{yamaguchi2008relationship})
\begin{align*}
\partial_1 &= \left( \Phi(g_1 -1),  \cdots ,\Phi(g_n-1)\right) \in M_{1,n}(M_{3,3}(\Cbb(t)))     \\
\partial_2 &=  \left( \Phi\left(\dfrac{ \partial r_i}{\partial g_j}\right)\right)_{1 \leq i \leq n-1,\   1\leq j \leq n} \in M_{n,n-1}(M_{3,3}(\Cbb(t))).
\end{align*}
Here $\partial/\partial g$ denotes the Fox calculus, $M_{i,j}$ denotes the set of $i\times j$ matrices, and  the map $\Phi : \Zbb[\pi_1(M)] \rightarrow M_{3,3}(\Cbb(t))$ is given by \[\Phi \left(\sum_{i} n_i g_i\right)=\sum_{i} n_i t^{\alpha(g_i)} \mathrm{Ad}_{\rho(g_i)} \in M_{3,3}(\Cbb(t))\]
for $n_i \in\Zbb$ and $g_i \in \pi_1(M)$.

From the fact that $W$ is simple homotopic to the knot exterior $M$, we have 
\begin{equation} \label{eqn:foxtor}
\mathrm{Tor}(M;\mathfrak{g}(t)_\rho) = \epsilon \cdot  \dfrac{\det \left( \partial_{2;\widehat{j}} \right)}{\det \left(\Phi(g_{j}-1)\right)}
\end{equation}
for some $1 \leq j \leq n$ and $\epsilon \in \{ \pm1\}$. Here $\partial_{2;\widehat{j}}$ is the square matrix obtained from $\partial_2$ by deleting the $j$-th row, and the index $j$ can be chosen freely among those satisfying  $\det (\Phi(g_j-1)) \neq 0$. The existence of such an index $j$ follows from  \cite[Lemma 2]{wada1994twisted}.
\begin{remark} \label{rmk:sign_ind}
The sign $\epsilon$ in the equation \eqref{eqn:foxtor} only depends on the choice of $j$ and whether the simple homotopy $W\rightarrow M$ preserves the homology orientation. See \cite[Remark 2.4]{turaev2001introduction} and \cite[Theorem 18.3]{turaev2001introduction}. 
\end{remark}


\section{Supporting evidences for Conjecture \ref{conj:intro}} \label{sec:proof}

In this section, we present a rigorous proof of Conjecture \ref{conj:intro} for the $4_1$ knot:
\begin{theorem} \label{thm:41} Let $M$ be the knot exterior of the $4_1$ knot. Let $\gamma\in H^1(\partial M; \Zbb)$ be any slope and $\mathrm{tr}_\gamma : X^{\mathrm{irr}}(M)\rightarrow \Cbb$ be its trace function.
Then we have
\[ \sum_{[\rho] \in \mathrm{tr}_\gamma^{-1}(z)} \dfrac{1}{\mathrm{Tor}(M;\mathfrak{g}_\rho,\gamma)}=0\]
for generic $z \in \Cbb$. 
\end{theorem}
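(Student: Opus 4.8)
The plan is to reduce the statement to an application of a global residue theorem on a suitable projective curve, exploiting the slope-changing rule \eqref{eqn:basischange} to trade the torsion with respect to an arbitrary slope $\gamma$ for the torsion with respect to the longitude $\lambda$, which is explicitly computable via the torsion polynomial. Concretely, the character variety $X^{\mathrm{irr}}(M)$ of the figure-eight knot is well known to be a single irreducible curve, cut out by a polynomial in the traces $x = \mathrm{tr}\rho(\mu)$ and $y=\mathrm{tr}\rho(\mu\lambda)$ (or equivalently in the eigenvalue coordinates $\ell = e^{u_\lambda}$ and $m=e^{u_\mu}$). I would fix an affine model of this curve, pass to its smooth projective completion $\overline{X}$, and identify the finitely many points at infinity together with the ideal points coming from the $A$-polynomial factors.

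First I would write $\mathrm{Tor}(M;\mathfrak{g}_\rho,\gamma)$ in terms of $\mathrm{Tor}(M;\mathfrak{g}_\rho,\lambda)$ via \eqref{eqn:basischange}: if $\gamma$ corresponds to the filling slope $p\mu+q\lambda$, then $\mathrm{Tor}(M;\mathfrak{g}_\rho,\gamma) = \frac{\partial u_\gamma}{\partial u_\lambda}\mathrm{Tor}(M;\mathfrak{g}_\rho,\lambda)$, and $\frac{\partial u_\gamma}{\partial u_\lambda} = p\,\frac{\partial u_\mu}{\partial u_\lambda} + q$ is a rational function on $\overline{X}$ computable from the defining polynomial by implicit differentiation. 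For the figure-eight, Yamaguchi's Theorem \ref{thm:yamaguchi} together with the Fox-calculus formula \eqref{eqn:foxtor} gives $\mathrm{Tor}(M;\mathfrak{g}_\rho,\lambda)$ as an explicit rational function of the trace coordinates (this is a classical computation; one gets something like a low-degree rational expression in $x$). Hence the summand $1/\mathrm{Tor}(M;\mathfrak{g}_\rho,\gamma)$ becomes an explicit rational function $\omega_\gamma$ on $\overline{X}$. The sum over $[\rho]\in \mathrm{tr}_\gamma^{-1}(z)$ is then the sum of $\omega_\gamma$ over the fiber of the degree-$d$ map $\mathrm{tr}_\gamma : \overline{X}\to \mathbb{P}^1$ above a generic $z$.

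The key step is then the observation that this fiberwise sum is, up to sign, $-\sum_{\text{poles}}\mathrm{Res}$ of the meromorphic $1$-form $\omega_\gamma \cdot d\log(\mathrm{tr}_\gamma - z)$ — no, more cleanly: the function $z\mapsto \sum_{[\rho]\in\mathrm{tr}_\gamma^{-1}(z)}\omega_\gamma([\rho])$ is a rational function of $z$, and by the residue theorem on $\overline{X}$ its value equals (minus) the sum of residues of $\omega_\gamma\, d(\mathrm{tr}_\gamma)/(\mathrm{tr}_\gamma - z)$ at the points where $\mathrm{tr}_\gamma = \infty$, i.e. at the points at infinity of $\overline{X}$ relative to the branched cover $\mathrm{tr}_\gamma$. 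So the theorem is equivalent to showing that $\omega_\gamma$ — equivalently $1/\mathrm{Tor}(M;\mathfrak{g}_\rho,\gamma)$ — has \emph{no poles at, and vanishes to sufficient order near, the ideal points} of $\overline{X}$ as seen through $\mathrm{tr}_\gamma$, so that the total residue at infinity is zero. Equivalently: $1/\mathrm{Tor}$ is regular at infinity and the $1$-form $\omega_\gamma\, d\mathrm{tr}_\gamma$ has no residue at the ideal points. I would verify this by a local analysis at each of the (finitely many) points of $\overline{X}\setminus X$: choose a local uniformizer, expand the eigenvalue functions $m,\ell$ and the torsion polynomial's derivative at $t=1$, and check the order of vanishing of $1/\mathrm{Tor}(M;\mathfrak{g}_\rho,\gamma)$ against the local degree of $\mathrm{tr}_\gamma$.

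The main obstacle I anticipate is precisely this boundary/ideal-point analysis, and in particular handling \emph{all} slopes $\gamma$ uniformly: as $\gamma = p\mu+q\lambda$ varies, the function $\mathrm{tr}_\gamma$ and its ramification at infinity change, and one must show the requisite vanishing of $1/\mathrm{Tor}(M;\mathfrak{g}_\rho,\gamma) = \left(p\tfrac{\partial u_\mu}{\partial u_\lambda}+q\right)^{-1}\big/\mathrm{Tor}(M;\mathfrak{g}_\rho,\lambda)$ holds at every ideal point for every $(p,q)$ — including slopes where $\mathrm{tr}_\gamma$ is non-constant at a given ideal point and slopes where it is constant there. This likely requires knowing the asymptotic behavior of the cusp shape / the function $\partial u_\mu / \partial u_\lambda$ (the derivative of the trace field data) at the ideal points of the figure-eight character variety, which I would extract from the $A$-polynomial $\ell m^4 - \ell^2 m^4 + \ell^2 m^2 + \ell - \ell m^2 - m^6 + \ell^2 m^6$ (up to normalization) and from the explicit torsion polynomial. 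A secondary but real subtlety is the $d_\gamma$ factor and the distinction between $\gamma\in\ker i_\ast$ or not, and making sure the "generic $z$" hypothesis correctly excludes the finitely many $z$ over which the fiber degenerates or a summand blows up; these I expect to be routine once the ideal-point estimates are in hand.
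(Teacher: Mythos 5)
Your skeleton matches the paper's: fix the Riley/eigenvalue model of $X^{\mathrm{irr}}(M)$, compute $\mathrm{Tor}(M;\mathfrak{g}_\rho,\lambda)$ explicitly via Fox calculus and Yamaguchi's theorem, convert to $\mathrm{Tor}(M;\mathfrak{g}_\rho,\gamma)$ by the slope-changing rule \eqref{eqn:basischange}, and recognize the fiberwise sum as a global residue that must vanish because there is ``nothing at infinity.'' Where you diverge is in the residue theorem you invoke, and that divergence is exactly where your proposal stops short. The paper does not compactify the character curve and analyze ideal points one by one; instead it works on $(\Cbb^\times)^2$ with the system $A(m,l)=0$, $B(m,l)=m^pl^q-x=0$, massages $1/\mathrm{Tor}(M;\mathfrak{g}_\rho,\gamma)$ into the exact shape $h(a)/\bigl(a_1a_2\,\mathrm{Jac}_{(A,B)}(a)\bigr)$ with $h=m^2-m^{-2}$, and applies Khovanskii's toric global residue theorem (Theorem \ref{thm:ejt}). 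The whole ideal-point analysis you defer --- which you correctly identify as the main obstacle, and which must be done uniformly in $(p,q)$ --- is replaced by two combinatorial checks: non-degeneracy of $(A,B)$ (the truncations $A^\beta$, $B^\beta$ have only simple common zeros for every direction $\beta$) and the Newton-polytope condition $\Delta(h)\subset\mathrm{int}\bigl(\Delta(A)+\Delta(B)\bigr)$, which holds for all slopes at once because $\Delta(B)$ always contains the origin. That is the missing idea in your write-up: without Khovanskii's criterion (or an equivalent toric compactification argument) you are left with an open-ended local computation at each ideal point for each $(p,q)$, and your proposal gives no mechanism for carrying it out or for seeing why it closes uniformly.

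Two smaller points. First, in the curve-theoretic version you sketch, the form $\omega_\gamma\,d\mathrm{tr}_\gamma/(\mathrm{tr}_\gamma-z)$ also has candidate poles at \emph{affine} points: the zeros of $\partial u_\gamma/\partial u_\lambda$ (these are harmless, being cancelled by zeros of $d\mathrm{tr}_\gamma$) and, more seriously, the zeros of $\mathrm{Tor}(M;\mathfrak{g}_\rho,\lambda)=\mp\epsilon(2m^2-1+2m^{-2})$. Your plan does not address the latter. In the paper this is resolved by the observation (their remark after \eqref{eqn:tor_gamma}) that $\partial A/\partial m$ contains $2m^2-1+2m^{-2}$ as a factor, so the torsion cancels out of the final expression and the only surviving numerator is $m^2-m^{-2}$; any version of your argument needs this cancellation. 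Second, the $d_\gamma$ issue you flag is vacuous here: at $g=0$ it contributes an overall constant $d_\gamma^{-1}$ to every summand, so it cannot affect the vanishing, and indeed Theorem \ref{thm:41} is stated without it.
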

The proof relies on some facts related to global residue theorem. They are briefly reviewed in Section \ref{sec:grt}. We refer to \cite{khovanskii1977newton, tsikh1992multidimensional} for detail .
We also present numerical verification of the conjecture for several knots in Section \ref{sec:num_comp}.

\subsection{A residue theorem for Laurent polynomials}  \label{sec:grt}

Let $f=(f_1,\cdots,f_n)$ be a system of $n$ Laurent polynomials in $n$ variables $z=(z_1,\cdots,z_n)$.
We denote by $Z_f$ the zero set $\{a \in (\Cbb^\times)^n : f_1(a) = \cdots= f_n(a)=0\}$, and say that $a \in Z_f$ is \emph{simple} if the Jacobian
\[\mathrm{Jac}_f = \det \left(\dfrac{\partial (f_1,\cdots,f_n)}{\partial (z_1,\cdots,z_n)}\right)\]
is non-zero at the point $a$.
 
With the usual notation $z^\alpha=z_1^{\alpha_1}\cdots z_n^{\alpha_n}$ for $\alpha=(\alpha_1,\cdots,\alpha_n) \in \Zbb^n$, we write $f_i=\sum_{\alpha} c_{i}^{\alpha}z^\alpha$ with the coefficients $c_i^\alpha \in \Cbb$.
The \emph{Newton polyhedron} $\Delta(f_i)$ of $f_i$ is the convex hull in $\Rbb^n$ of the set $\{ \alpha \in \Zbb^n : c_i^\alpha \neq 0\}$.
For a non-zero vector $\beta \in \Rbb^n$ we denote by $\Delta^\beta(f_i)$ the face of $\Delta(f_i)$ on which the function $\langle \cdot, \beta \rangle : \Rbb^n \rightarrow \Rbb$ attains a minimum. Here $\langle \cdot ,\cdot\rangle$ denotes the usual inner product on $\Rbb^n$. 
The system $f=(f_1,\cdots,f_n)$ is said to be \emph{non-degenerate} (\cite{khovanskii1977newton}) if  the system  \[f^\beta=(f_1^\beta, \cdots, f_n^\beta), \quad f_i^\beta=\sum_{\alpha \in \Delta^\beta(f_i)} c_i^\alpha z^\alpha,\]
has only simple zeros in $(\Cbb^\times)^n$ for any non-zero $\beta \in \Rbb^n$.

 In \cite{khovanskii1977newton} Khovanskii gave a generalization of the global residue theorem as follows (see also \cite[\S 7]{tsikh1992multidimensional}).

\begin{theorem}[\cite{khovanskii1977newton}] \label{thm:ejt} Let $f=(f_1,\cdots,f_n)$ be a non-degenerate system of Laurent polynomials. Suppose that all of the zeros $ a =(a_1,\cdots, a_n)\in Z_f$ are simple. Then for any Laurent polynomial $h$ whose Newton polyhedron $\Delta(h)$ lies strictly  inside the (Minkowski) sum $\Delta(f_1) +\cdots+\Delta(f_n)$, we have 
\[\sum_{a\in Z_f} \dfrac{h(a)}{a_1\cdots a_n  \cdot \mathrm{Jac}_f(a)}=0 .\]
\end{theorem}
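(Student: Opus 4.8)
The plan is to realize the sum in question as the total residue of a global meromorphic top-form on a compact complex manifold and to make it vanish via the global residue theorem. The form to use is $\omega=\dfrac{h}{f_1\cdots f_n}\,\dfrac{dz_1}{z_1}\wedge\cdots\wedge\dfrac{dz_n}{z_n}$ on the torus $(\Cbb^\times)^n$. Passing to logarithmic coordinates $w_j=\log z_j$ one has $dz_j/z_j=dw_j$ and $\det(\partial f_i/\partial w_j)=z_1\cdots z_n\cdot\mathrm{Jac}_f$, so at a simple zero $a\in Z_f$ the local Grothendieck residue of $\omega$ is precisely $h(a)/(a_1\cdots a_n\,\mathrm{Jac}_f(a))$. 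Thus the assertion is exactly that $\sum_{a\in Z_f}\mathrm{Res}_a(\omega)=0$, and everything reduces to producing a compactification of $(\Cbb^\times)^n$ on which $\omega$ becomes a global meromorphic $n$-form with poles only along the $n$ hypersurfaces $\{f_i=0\}$, which meet in the finite set $Z_f$.

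For the compactification I would take a toric variety adapted to the Newton polyhedra. Put $P=\Delta(f_1)+\cdots+\Delta(f_n)$; if $P$ fails to be full-dimensional then some nonzero $\beta$ is constant on $P$ and hence on each $\Delta(f_i)$, so $f^\beta=f$, and the argument of the next paragraph forces $Z_f=\emptyset$, making the sum vacuously zero. Assume $P$ full-dimensional and let $X=X_\Sigma$ be a smooth complete toric variety whose fan $\Sigma$ refines the normal fan of $P$ (which already refines each $\mathrm{NF}(\Delta(f_i))$). The functions $f_i$, $h$ and the form $\omega$ extend meromorphically to $X$, and for each ray $\rho\in\Sigma(1)$ with primitive generator $v_\rho$ and boundary divisor $D_\rho$, a direct order computation (using that the invariant form $dz_1/z_1\wedge\cdots\wedge dz_n/z_n$ has a simple pole along $D_\rho$, and that the support function of a Minkowski sum is the sum of the support functions) gives $\mathrm{ord}_{D_\rho}(\omega)\ge\min_{\Delta(h)}\langle v_\rho,\cdot\rangle-\min_{P}\langle v_\rho,\cdot\rangle-1$. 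The hypothesis that $\Delta(h)$ lies \emph{strictly} inside $P$ means exactly that $\min_{\Delta(h)}\langle v_\rho,\cdot\rangle>\min_{P}\langle v_\rho,\cdot\rangle$ for every nonzero $v_\rho$, and as both sides lie in $\Zbb$ this yields $\mathrm{ord}_{D_\rho}(\omega)\ge0$. Hence $\omega$ has no poles along the toric boundary $X\setminus(\Cbb^\times)^n$.

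Next I would check that $Z_f$ does not escape to the boundary. For a cone $\sigma\in\Sigma$ with torus orbit $O_\sigma$, the closure $\overline{Z_f}$ meets $O_\sigma$ inside the common zero set of the initial forms $f_1^\sigma,\dots,f_n^\sigma$, where $f_i^\sigma$ is the initial form of $f_i$ along any $\beta$ in the relative interior of $\sigma$ (well-defined since $\Sigma$ refines $\mathrm{NF}(\Delta(f_i))$). Each $\Delta^\beta(f_i)$ lies in an affine subspace orthogonal to $\mathrm{span}\,\sigma$, so $f_i^\beta$ is an eigenfunction for the $(\dim\sigma)$-dimensional subtorus generated by $\sigma$; therefore, if the common zero set of $f^\beta$ in $(\Cbb^\times)^n$ were nonempty, it would be a union of orbits of that subtorus, of dimension $\ge\dim\sigma\ge1$, and $\mathrm{Jac}_{f^\beta}$ would vanish at each of its points --- contradicting non-degeneracy, which requires $f^\beta$ to have only simple zeros. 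So $f^\beta$ has no common zero and $\overline{Z_f}\cap O_\sigma=\emptyset$ for every $\sigma\neq0$; thus $\overline{Z_f}=Z_f$ is a finite subset of the open torus, and simplicity of the zeros makes the hypersurfaces $\overline{\{f_1=0\}},\dots,\overline{\{f_n=0\}}$ smooth, pairwise transverse, and with total intersection exactly $Z_f$.

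With these two facts, $\omega$ is a global meromorphic $n$-form on the compact complex manifold $X$ whose polar divisor is supported on $n$ hypersurfaces in general position meeting only along the finite set $Z_f$, so the global residue theorem for compact complex manifolds (the several-variable analogue of $\sum\mathrm{Res}=0$ on a compact Riemann surface; one can cite \cite{tsikh1992multidimensional}, or the toric formulation in \cite{khovanskii1977newton}) gives $\sum_{a\in Z_f}\mathrm{Res}_a(\omega)=0$, which by the first paragraph is the claimed identity. I expect the main obstacle to be the boundary-avoidance step: converting the combinatorial non-degeneracy condition into the geometric statement that $Z_f$ acquires no limit points on any toric orbit, in particular the claim that an initial-form system along a cone is invariant under the corresponding subtorus and therefore has positive-dimensional --- hence non-simple --- zeros as soon as it has any. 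A secondary point to handle with care is invoking the global residue theorem in a form valid on a possibly non-projective smooth complete toric variety (or arranging $X$ projective), and verifying that its ``general position'' hypothesis reduces precisely to the simple-zero assumption already in place.
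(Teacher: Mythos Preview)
The paper does not prove this theorem; it is quoted from Khovanskii's original paper \cite{khovanskii1977newton} (with a pointer to \cite{tsikh1992multidimensional}) and used as a black box in the proof of Theorem~\ref{thm:41}. So there is no ``paper's own proof'' to compare against.

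That said, your sketch is essentially Khovanskii's argument and is correct in outline: interpret each summand as the local Grothendieck residue of $\omega=\dfrac{h}{f_1\cdots f_n}\,\dfrac{dz_1}{z_1}\wedge\cdots\wedge\dfrac{dz_n}{z_n}$, compactify the torus by a smooth complete toric variety whose fan refines the normal fan of $P=\sum_i\Delta(f_i)$, use the strict-interior hypothesis on $\Delta(h)$ (together with integrality of the support-function values) to kill poles of $\omega$ along the toric boundary, use non-degeneracy to show that the closures $\overline{\{f_i=0\}}$ have no common point on any boundary orbit, and conclude by the global residue theorem on a compact complex manifold. Your treatment of the degenerate case (when $P$ is not full-dimensional) is also fine.

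Two small points. First, your sentence that simplicity of the zeros makes the individual hypersurfaces $\overline{\{f_i=0\}}$ smooth and pairwise transverse is stronger than what you have shown and stronger than what is needed; the version of the global residue theorem you are invoking only requires $\bigcap_i \overline{\{f_i=0\}}$ to be a finite set, which you have already established. Second, the concern about projectivity is easily dispatched: either note that the global residue theorem holds on any compact complex manifold (the proof is Stokes' theorem, not Hodge theory), or refine the fan further to make $X$ projective. Neither issue affects the validity of the argument.
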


\subsection{Proof of Theorem \ref{thm:41}} \label{sec:41}

We orient $K=4_1$ as in Figure \ref{fig:diagram41} and fix a presentation of the fundamental group of the knot exterior $M$ by 
\[\pi_1(M) = \langle g_1, g_2 | g_1^{-1} g_2 g_1 g_2^{-1} g_1 - g_2 g_1^{-1} g_2 g_1 g_2^{-1} \rangle.\] 
An irreducible representation $\rho : \pi_1(M) \rightarrow G$ is given by up to conjugation
\begin{equation} \label{eqn:repn_cor}
g_1 \mapsto \begin{pmatrix} m & 1 \\ 0 & m^{-1} \end{pmatrix}, \quad  g_2 \mapsto \begin{pmatrix} m & 0 \\ y & m^{-1} \end{pmatrix}
\end{equation}
for a pair $(y,m)\in (\Cbb^\times)^2$ satisfying
\begin{equation} \label{eqn:Riley}
	f(y,m)=(y-1)(m^2+m^{-2}) + y^2-3y+3 =0.
\end{equation}
Also, zeros $(y_1,m_1)$ and $(y_2,m_2) \in (\Cbb^\times)^2$ of $f$ correspond to the same irreducible representation up to conjugation if and only if  $y_1=y_2$ and $m_1 = m_2^{\pm1}$. Thus the set $X^\mathrm{irr}(M)$ coincides with the zero set of $f$ with quotient given by $(y,m)\sim(y,m^{-1})$. We refer to \cite{riley1984nonabelian} for details.

\begin{figure}[!h]
    \centering
    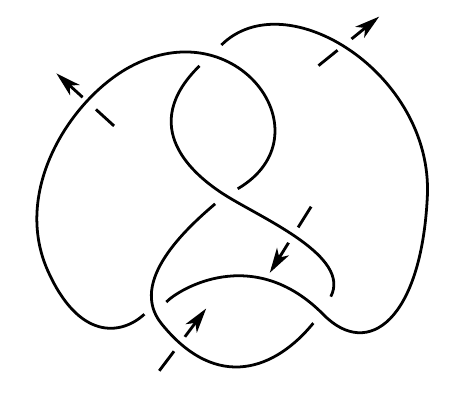
    \caption{The $4_1$ knot.}
    \label{fig:diagram41}
\end{figure}

We take a meridian $\mu$ of $K$ as the generator $g_1$. The canonical longitude $\lambda$ is given by $g_2^{-1} g_1 g_3^{-1}g_4$ from the diagram where $g_3=g_2 g_1 g_2^{-1}$ and $g_4=g_3^{-1}g_2 g_3$ (see Figure \ref{fig:diagram41}). It follows that
\[\rho(\lambda)=\rho(g_2^{-1} g_1 g_3^{-1}g_4)
=\begin{pmatrix}
 l & \ast \\ 0 & l^{-1}
\end{pmatrix}\]
where $l = - m^{-2}(y-3)(y-1)^2-m^{-4}(y^2-3y+1)$. 
Taking the resultant of $l$ with $f(y,m)=0$ to remove the variable $y$, we obtain the $\mathrm{SL}_2(\mathbb{C})$ A-Polynomial
    \[A(m,l)=l+l^{-1} +(-m^{-4}+m^{-2}+2+m^2-m^4)=0.\]
One can check that for any slope $\gamma = \mu^p \lambda^p$ ($p,q$ : coprime integers) and generic $z \in \Cbb$, the set $\mathrm{tr}_\gamma^{-1}(z)$ is identified with 
\begin{align*}
    \mathrm{tr}_\gamma^{-1}(z)=\left\{(m,l) \in (\Cbb^\times)^2: A(m,l)=0, \  B(m,l):=m^p l^q-x=0\right\}
\end{align*}
where $x \in \Cbb^\times$ is a solution to $x+x^{-1}=z$.

\begin{remark}
    The function $u_\gamma\mkern 2mu (=p \log m +q \log l)$ given as in the equation \eqref{eqn:fcnu} is non-constant on $X^\mathrm{irr}(M)$.
    It implies that 
    the set of irreducible non-$\gamma$-regular characters in $X^\mathrm{irr}(M)$ is discrete. See \cite[Proposition 3.26]{porti1997torsion} and  \cite[Remark 9]{dubois2009non}. 
    In particular, we may assume that  $\mathrm{tr}_\gamma^{-1}(z)$ consists of irreducible $\gamma$-regular characters for generic $z \in \Cbb$.  
\end{remark}

Let $r$ be the relator of the group presentation. One can compute that 
\[ \dfrac{\partial r}{\partial g_1} = -g_1^{-1}  + g_1^{-1} g_2 + g_1 ^{-1} g_2 g_1 g_2^{-1}+ g_2 g_1^{-1} - g_2 g_1^{-1} g_2.\]
It follows from the equations \eqref{eqn:foxtor} and  \eqref{eqn:yamaguchi} that
\begin{align*}
\mathrm{Tor}(M;\mathfrak{g}(t)_\rho) &= \epsilon \cdot \dfrac{\det \left(\Phi(\partial r / \partial g_1)\right)}{\det \left(\Phi(g_1-1)\right)} \\
&=\epsilon \cdot \dfrac{(t-1)(2m^2 -(t-1+t^{-1})+2m^{-2})}{t^2}
\end{align*}
and 
\begin{equation} \label{eqn:tor_lambda}
    \mathrm{Tor}(M;\mathfrak{g}_\rho,\lambda) = -\left. \dfrac{d}{dt}\right|_{t=1} \mathrm{Tor}(M;\mathfrak{g}(t)_\rho)= -\epsilon \cdot(2m^2-1+2m^{-2})
\end{equation}
for some $\epsilon \in \{ \pm1\}$.
Also, from the equation \eqref{eqn:basischange} we have
\begin{align*}
\mathrm{Tor}(M;\mathfrak{g}_\rho,\gamma) &=\dfrac{\partial(p \log m + q \log l)}{\partial \log l}\cdot\mathrm{Tor}(M;\mathfrak{g}_\rho,\lambda)\\
&= \left(p \dfrac{l}{m} \dfrac{\partial m}{\partial l} +q \right)  \cdot \mathrm{Tor}(M;\mathfrak{g}_\rho,\lambda) \\
&= \left(-p \dfrac{l}{m} \dfrac{\partial A/\partial l}{\partial A/\partial m} +q \right)  \cdot \mathrm{Tor}(M;\mathfrak{g}_\rho,\lambda) \\
&= \dfrac{l}{x} \cdot \dfrac{\det\left( \dfrac{\partial (A,B)}{\partial (m,l)}\right)}{\partial A/\partial m}  \cdot \mathrm{Tor}(M;\mathfrak{g}_\rho,\lambda)
\end{align*} 
The last equation follows from $\partial B/\partial m = p m^{p-1} l^q = px /m$ and $\partial B/ \partial l = qx/l$.
Plugging the equation \eqref{eqn:tor_lambda} and $\partial A/\partial m=-2m^{-1}(2m^2-1+2m^2)(m^2-m^{-2})$, we obtain
\begin{equation}\label{eqn:tor_gamma}
\dfrac{1}{\mathrm{Tor}(M;\mathfrak{g}_\rho,\gamma)}= 2\epsilon x  \cdot \dfrac{m^2-m^{-2}}{ml \cdot\det\left( \dfrac{\partial (A,B)}{\partial (m,l)}\right)}.
\end{equation}
\begin{remark} It is interesting that the derivative $\partial A/\partial m$ has $\mathrm{Tor}(M;\mathfrak{g}_\rho,\lambda)$ as a factor. This fact is also pointed out in \cite[Remark 4.5]{dimofte2013quantum}.
\end{remark}

We now claim that the system $(A,B)$ and the Laurent polynomial $h:=m^2-m^{-2}$ satisfy the condition of Theorem \ref{thm:ejt}.  
\begin{itemize}
	\itemsep 0.5em
    \item for any non-zero $\beta \in \Rbb^2$ the Laurent polynomial $A^\beta$ is either $l^{\pm1}$, $m^{\pm4}$, or $l^\pm - m^{\pm4}$, and the Laurent polynomial $B^\beta$ is either $x$, $m^p l^q$, or $m^p l^q-x$. Any pair of the above has only simple zeros in $(\Cbb^\times)^2$, so the system $(A,B)$ is non-degenerate.
    \item A straightforward computation shows that $\mathrm{Jac}_{(A,B)}=0$ if and only if $p(l-l^{-1})=2q(2m^2-1+2m^{-2})(m^2-m^{-2})$. Therefore, the system $(A,B)$ only has simple zeros for generic $x\in\Cbb^\times$.
    \item the Newton polygon $\Delta(A)$ strictly contains $\Delta(h)$, and $\Delta(B)$ contains the origin. Thus, $\Delta(h)$ strictly lies inside in $\Delta(A)+\Delta(B)$.
\end{itemize}
  Recall Remark \ref{rmk:sign_ind} that the sign $\epsilon$ in the equation \eqref{eqn:tor_gamma} does not depend on the choice of pair $(m,l)$. Therefore, Theorem \ref{thm:41} is obtained from Theorem \ref{thm:ejt}.
 
\subsection{Numerical verification} \label{sec:num_comp}

We here present numerical verification of Conjecture \ref{conj:intro} for $K=5_2$ and $7_4$.
\subsubsection{The $5_2$ knot}
The fundamental group of the knot exterior of $K=5_2$ is generated by two elements and 
$X^{\mathrm{irr}}(M)$ is given by the zero set of
\[f(y,m)=(y-2)(y-1)(m^2+m^{-2})+y^3-5y^2+8y-3\]
in $(\Cbb^\times)^2$ with the quotient given by $(y,m) \sim (y,m^{-1})$.
Similar computations as in Section \ref{sec:41} give that for a slope $\gamma = \mu^p \lambda^q$ and generic $x \in \Cbb$, we have
\[\mathrm{tr}_\gamma^{-1}(x) = \left\{(y,m,l) \in (\Cbb^\times)^3 : f(y,m)=0,\ g(y,m,l)=0,\ h(m,l)=0\right\}\]
where
\begin{equation*}
    \left\{ 
        \begin{array}{l}
        g(y,m,l)=l +(y-1) -m^{2}-m^{4}( y^3-5y^2+9y-4) + 2m^{6}(y-1)(y-2)  \\[7pt]
        h(m,l)=m^pl^q-x
        \end{array}.
    \right.
\end{equation*} Here $x\in \Cbb^\times$ is a solution to $x+x^{-1}= z$.
Also, the torsion $\mathrm{Tor}(M;\mathfrak{g}_\rho,\lambda)$ for the canonical longitude $\lambda$ is given by 
\[\mathrm{Tor}(M;\mathfrak{g}_\rho,\lambda) = \epsilon \cdot \dfrac{5y^3-21y^2+28y-14}{y-1}\]
for some $\epsilon \in \{\pm1\}$.
\begin{lemma} \label{lem:comp1}
    For $(y,m,l) \in \mathrm{tr}_\gamma^{-1}(x)$, we have
		\[ \dfrac{\partial (p \log m + q \log l)}{ \partial \log l}=p\dfrac{l}{m}\dfrac{\partial m}{\partial l}+q=
		 \dfrac{l}{x} \cdot \dfrac{\mathrm{det} \left( \dfrac{ \partial (f,g,h)}{\partial (y,m,l) } \right)}{\mathrm{det} \left(  \dfrac{ \partial (f,g) }{\partial (y,m)} \right)}.      \]
\end{lemma}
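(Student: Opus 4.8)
\textbf{Proof plan for Lemma \ref{lem:comp1}.}

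The plan is to prove the two claimed equalities separately. The first equality $\partial(p\log m + q\log l)/\partial\log l = p\,\tfrac{l}{m}\,\tfrac{\partial m}{\partial l} + q$ is just the chain rule: on the curve $\mathrm{tr}_\gamma^{-1}(x)$ the functions $\log m$ and $\log l$ are related through a single local parameter, and differentiating $p\log m + q\log l$ with respect to $\log l$ gives $p\,\tfrac{\partial \log m}{\partial \log l} + q = p\,\tfrac{l}{m}\tfrac{\partial m}{\partial l} + q$. Here I would first note, as in the $4_1$ case, that $\dim X^{\mathrm{irr}}(M) = 1$ and that $u_\gamma = p\log m + q\log l$ is non-constant, so $\log l$ is a legitimate local coordinate at a generic point and all these derivatives make sense.

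For the second equality, the idea is to realize the right-hand side as the same logarithmic derivative computed implicitly from the defining equations $f(y,m) = 0$, $g(y,m,l) = 0$, $h(m,l) = m^p l^q - x = 0$. First I would observe that $f$ and $g$ alone (setting aside $h$) cut out the affine curve $X = X^{\mathrm{irr}}(M)$ in the $(y,m,l)$-coordinates, so along $X$ the differentials satisfy $f_y\,dy + f_m\,dm = 0$ and $g_y\,dy + g_m\,dm + g_l\,dl = 0$. Solving this $2\times 2$ linear system for $dm$ in terms of $dl$ gives
\[
\frac{\partial m}{\partial l} = -\,\frac{f_y\, g_l}{\det\!\left(\dfrac{\partial(f,g)}{\partial(y,m)}\right)},
\]
where $\det\big(\partial(f,g)/\partial(y,m)\big) = f_y g_m - f_m g_y$. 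Substituting this into $p\,\tfrac{l}{m}\tfrac{\partial m}{\partial l} + q$ and then using the equation $h = 0$, i.e. $m^p l^q = x$, together with $h_m = p\,m^{p-1}l^q = px/m$ and $h_l = q\,m^p l^{q-1} = qx/l$, I would rewrite the whole expression over the common denominator $\det\big(\partial(f,g)/\partial(y,m)\big)$ and recognize the numerator as an expansion of $\tfrac{l}{x}\det\big(\partial(f,g,h)/\partial(y,m,l)\big)$ along its last row. Concretely, expanding the $3\times 3$ Jacobian along the row $(h_y, h_m, h_l) = (0,\, px/m,\, qx/l)$ yields $\tfrac{px}{m}(\text{minor}) + \tfrac{qx}{l}(\text{minor})$, and a direct check shows this matches $q\det\big(\partial(f,g)/\partial(y,m)\big) + p\,\tfrac{l}{m}\cdot\big(-f_y g_l\big)\cdot\tfrac{x}{l}$ up to the overall factor $\tfrac{x}{l}$, which is exactly what is needed.

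The main obstacle is purely bookkeeping: one must be careful that $f$ does not involve $l$ (so $f_l = 0$), that the cofactor expansion of the $3\times3$ determinant along the last row is assembled with the correct signs, and that the local coordinate assumption (genericity of $x$, so that the points of $\mathrm{tr}_\gamma^{-1}(x)$ are simple and $\gamma$-regular, and $\det\big(\partial(f,g)/\partial(y,m)\big) \neq 0$ there) is in force so that $\partial m/\partial l$ is well-defined. Once the Jacobian is expanded along the last row and $h_m, h_l$ are substituted, the identity reduces to the elementary linear-algebra computation above, so no genuinely hard step remains — this lemma is the $5_2$-analogue of the chain of equalities displayed for $4_1$ in Section \ref{sec:41}, and the same reasoning applies verbatim with $y$ as an auxiliary variable that is eliminated via $f$.
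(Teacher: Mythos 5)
Your proposal is correct and follows essentially the same route as the paper: implicit differentiation of $f$ and $g$ (i.e.\ $df=dg=0$) to express $\partial m/\partial l$ via Cramer's rule, combined with a cofactor expansion of the $3\times 3$ Jacobian along the row $(0,\,px/m,\,qx/l)$ coming from $h$. The paper merely runs the same computation in the opposite direction, starting from the $3\times 3$ determinant and reducing to $q+p\tfrac{l}{m}\tfrac{\partial m}{\partial l}$.
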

\begin{proof} A straightforward computation gives
\begin{align*}
    \mathrm{det} \left( \dfrac{ \partial (f,g,h)}{\partial (y,m,l) } \right) &=\mathrm{det} \left(  \dfrac{ \partial (f,g) }{\partial (y,m)} \right) \cdot \dfrac{qx}{l}  - \dfrac{\partial f}{\partial y} \dfrac{\partial g}{\partial l} \cdot \dfrac{px}{m}\\
    &=\mathrm{det} \left(  \dfrac{ \partial (f,g) }{\partial (y,m)} \right) \cdot \left ( \dfrac{qx}{l}+ \dfrac{pz}{m}  \dfrac{\partial m}{\partial l}   \right)\\
    &=\mathrm{det} \left(  \dfrac{ \partial (f,g) }{\partial (y,m)} \right) \cdot \dfrac{x}{l}  \left ( q+ \dfrac{l}{m} \dfrac{\partial m}{\partial l}  \right).
\end{align*}
The second equality follows from $df=dg=0$.
\end{proof}
One can compute that \begin{align*}
 \mathrm{det} \left(  \dfrac{ \partial (f,g) }{\partial (y,m)} \right) &= \dfrac{5y^3-21y^2+28y-14}{y-1}  \\
     & \quad \quad \cdot \dfrac{2((m^2+y-2)(y^4-6y^3+13y^2-12y+3)-(y-2)^2)}{m^9(y-1)(y-2)^3}
\end{align*}
and 
\begin{align*}
    \mathrm{Tor}(M;\mathfrak{g}_\rho,\gamma)&=\dfrac{\partial (p \log m + q \log l)}{ \partial \log l} \cdot \mathrm{Tor}(M;\mathfrak{g}_\rho,\lambda) \\
    &=\dfrac{l}{x} \cdot \dfrac{\mathrm{det} \left( \dfrac{ \partial (f,g,h)}{\partial (y,m,l) } \right)}{\mathrm{det} \left(  \dfrac{ \partial (f,g) }{\partial (y,m)} \right)} \cdot \mathrm{Tor}(M;\mathfrak{g}_\rho,\lambda)\\
    &=\dfrac{\epsilon l}{2x} \cdot \dfrac{\mathrm{det} \left( \dfrac{ \partial (f,g,h)}{\partial (y,m,l) } \right) \cdot m^9 (y-1)(y-2)^3  }{(m^2+y-2)(y^4-6y^3+13y^2-12y+3)-(y-2)^2}. \\
\end{align*}

\begin{remark} It is worth noting that the Jacobian of $(f,g)$
again contains the torsion $\mathrm{Tor}(M;\mathfrak{g}_\rho,\lambda)$ as a factor.
\end{remark}
To verify Conjecture \ref{conj:intro}, we let $F = m^2f$, $G=(y-1)(y-2)^3 m^6 g$, $H=h$, and compute the sum of
\[ \dfrac{1}{\mathrm{Tor}(M;\mathfrak{g}_\rho,\gamma)} = 2\epsilon x \cdot \dfrac{y(m^2+y-2)(y^4-6y^3+13y^2-12y+3)-y(y-2)^2}{yml \cdot \mathrm{det} \left( \dfrac{ \partial (F,G,H)}{\partial (y,m,l) } \right)} \]
over all $(y,m,l) \in \mathrm{tr}_\gamma^{-1}(z)$, which is the zero set of $(F,G,H)$. 
For instance, the set $\mathrm{tr}_\gamma^{-1}(z)$ for $(p,q)=(3,1)$ and $z=\frac{3}{2}+\frac{i}{2}$ consists of 23 points with torsions
\begin{equation*}
    \begin{array}{rrr}
        -5.1707095 + 6.056876 i, &  -5.1403791 - 5.271889 i,  & -4.9799403 + 5.257641 i, \\
        -4.7335145 - 7.299169 i, &  -4.6988457 - 5.941816 i, & -4.3082655 + 7.042614 i, \\
        -3.8808087 - 6.974908 i, &  -3.3630233 + 7.605688 i, & -2.6624296 + 3.284613 i, \\
        0.2005695 - 4.913042 i, & 9.8858003 + 2.112603 i, &  14.549795 + 0.213397 i,\\
        14.568149 + 0.187863 i, & 15.922137 - 0.358869 i, &  16.535205 - 0.634458 i, \\
        17.512936 + 0.306584 i , &  18.497289 - 1.694233 i, &  18.514426 - 0.117280 i,\\
        19.936167 + 0.800241 i, &  23.334158 - 0.639555 i, &  25.010603 + 1.138408 i, \\
        25.406178 + 0.241449 i, & 28.564506 - 0.402759 i&
    \end{array}
\end{equation*} whose inverse sum  is zero  numerically. 

\subsubsection{The $7_4$ knot} \label{sec:knot74}
It is known that for $K=7_4$ the algebraic set $X^\mathrm{irr}(M)$ has two irreducible components. Precisely, $X^\mathrm{irr}(M)$ is given by the zero set of $f_1(y,m) \cdot f_2(y,m)$ in $(\Cbb^\times)^2$ where
\begin{align*}
    f_1(y,m)&=(y-2)^2(m^2+m^{-2})+y^3-6y^2+12y-7     \\
    f_2(y,m)&=y(y-1)(y-2)(m^2+m^{-2})+y^4-5y^3+8y^2-4y+1
\end{align*}
with the quotient given by $(y,m)\sim(y,m^{-1})$.

We first consider irreducible representations coming from the zero set of $f_1$.
In this case, similar computations as in the previous section give 
\begin{align*} 
g_1(y,m,l) &=l-\frac{1}{m^{16}} \Big( m^{16} (5 - 2 y)+ m^{14} (5 - 3 y)+ m^{12} (-9 + 2 y)+2 m^{10} (-17 + 7 y) \\
& \hspace{5.5em}  + 8 m^8 (-3 + 2 y) + m^6 (86 - 26 y) + m^4 (219 - 98 y) \\
& \hspace{5.5em} + m^2 (-463 + 99 y + 707 y^2 + 1104 y^3 - 9638 y^4 + 34306 y^5 \\
&\hspace{8.2em} -  66187 y^6 + 75756 y^7 - 55526 y^8 + 27256 y^9 - 9116 y^{10} \\
&\hspace{8.2em} +2060 y^{11} - 302 y^{12} + 26 y^{13} - y^{14}) \\
& \hspace{5.5em}
       - (2 - y)^2 (-61 - 115 y - 66 y^2 + 321 y^3 - 
       1552 y^4 + 3558 y^5\\
&\hspace{8.2em}        - 3829 y^6 + 2282 y^7 - 809 y^8 + 
       171 y^9 - 20 y^{10} + y^{11})
\Big)    
\end{align*} 
whose resultant with $f_1$ gives one component $A_1$ of the A-polynomial
\begin{align*}
A_1(m,l)=& m^{14} + 
l (1 - 2 m^2 + 3 m^4 + 2 m^6 - 7 m^8 + 2 m^{10} + 6 m^{12} - 2 m^{14}) \\&+ 
l^2 (-2 + 6 m^2 + 2 m^4 - 7 m^6 + 2 m^8 + 3 m^{10} - 2 m^{12} + m^{14}) + l^3.
\end{align*}
Also, the torsion $\mathrm{Tor}(M,\mathfrak{g}_\rho,\lambda)$ for the canonical longitude $\lambda$ is given by
\begin{align*}
&\frac{1}{m^{24} \left(m^2-1\right)^2}
\Big(m^{28} (12-8 y)+4 m^{26} (y-2)+m^{24} (24 y-29)+m^{22} (34-30 y)\\
	&+m^{20} (20 y-31)+m^{18} (23-4 y)+m^{16} (91-43 y)+m^{14} (6-16 y)+m^{12} (99 y-287)\\
	&+m^{10} (250 y-502)+m^8 (455-61 y)+m^6 (2665-1076 y)+m^4 (2105-1354 y)\\
	&+m^2 (-7 y^{17}+212 y^{16}-2931 y^{15}+24429 y^{14}-136467 y^{13}+536999 y^{12}-1521221 y^{11}\\
	&\hspace{3em}+3111157 y^{10}-4527550 y^9+4521505 y^8-2878308 y^7+978784 y^6-53392 y^5\\
	&\hspace{3em}-77400 y^4+16421 y^3+6770 y^2+12018 y-9213)\\
	&-(y-2)^2 (7 y^{14}-170 y^{13}+1834 y^{12}-11520 y^{11}+46330 y^{10}-123292 y^9+215917 y^8\\
	&\hspace{4em}-237784 y^7+147025 y^6-37064 y^5-3126 y^4+4792 y^3-155 y^2-1322 y-1591)\Big)
\end{align*}
Similarly, from the other component $f_2$, we have
\begin{align*}
g_2(y,m,l)=l&-
\frac{1}{m^{16}}\Big( m^{24} (-(y-2)) y+m^{22} \left(y^2-3 y+1\right)-m^{18} (y-2) y+m^{16} \left(2 y^2-6 y+3\right)\\
&+m^{14} \left(3 y^2-7 y+2\right)+3 m^{12} (y-1)^2-3 m^{10} (y-3)+m^8 \left(y^2-16 y+27\right)\\
&+m^6 \left(21 y^2-77 y+64\right)+m^4 \left(71 y^2-206 y+129\right)\\
&-m^2 (y^{13}-23 y^{12}+232 y^{11}-1350 y^{10}+5022 y^9-12552 y^8+21757 y^7-27137 y^6\\
&\hspace{3em}+25878 y^5-20076 y^4+12429 y^3-5513 y^2+1582 y-260)\\
&-y (y^{11}-21 y^{10}+191 y^9-987 y^8+3201 y^7-6828 y^6+9895 y^5-10224 y^4\\
&\hspace{3em}+8164 y^3-5252 y^2+2380 y-520)\Big)
\end{align*} with the other component $A_2$ of the $A$-polynomial
\[A_2(m,l)=m^8 + l (-1 + m^2 + 2 m^4 + m^6 - m^8)+l^2 \]
and the torsion $\mathrm{Tor}(M,\mathfrak{g}_\rho,\lambda)$ given by 
\begin{align*}
&\frac{1}{m^{24} \left(m^2-1\right)^2}\Big(4 m^{34} (y-2) y+m^{32} \left(-19 y^2+42 y-4\right)+15 m^{30} \left(4 y^2-9 y+1\right)\\&
+m^{28} \left(-116 y^2+277 y-49\right)+m^{26} \left(138 y^2-343 y+59\right)+m^{24} \left(-113 y^2+286 y-49\right)\\&
+m^{22} \left(48 y^2-111 y+11\right)+m^{20} \left(-34 y^2+65 y-17\right)+2 m^{18} \left(2 y^2+5 y-20\right)\\&
+m^{16} \left(-12 y^2+85 y-113\right)-4 m^{14} \left(23 y^2-80 y+66\right)+m^{12} \left(-267 y^2+805 y-554\right)\\&
+m^{10} \left(-550 y^2+1633 y-1185\right)+m^8 \left(-1005 y^2+3242 y-2758\right)\\&
+m^6 \left(-2152 y^2+7471 y-6747\right)+m^4 \left(-5688 y^2+19271 y-16363\right)\\&
+m^2 (-4 y^{18}+117 y^{17}-1568 y^{16}+12790 y^{15}-71174 y^{14}+287221 y^{13}-872939 y^{12}\\&
\hspace{3em}+2051452 y^{11}-3807909 y^{10}+5701024 y^9-7049263 y^8+7379282 y^7-6653194 y^6\\&
\hspace{3em}+5176328 y^5-3442171 y^4+1908473 y^3-815463 y^2+230625 y-38691)\\&
+(-4 y^{16}+109 y^{15}-1354 y^{14}+10183 y^{13}-51960 y^{12}+191168 y^{11}-526550 y^{10}\\&
\hspace{3em}+1114704 y^9-1854044 y^8+2479718 y^7-2740522 y^6+2567766 y^5-2060060 y^4\\&
\hspace{3em}+1399565 y^3-784584 y^2+333247 y-77382) y\Big)
\end{align*}

We consider the case of $(p,q)=(1,1)$ and $x = 2 + 3 i$.
The system of $f_1(y,m)$, $g_1(y,m,l),$ and $m^pl^q-x$ has 17 zeros while the other system of $f_2(y,m)$, $g_2(y,m,l)$, and $m^pl^q-x$ has 20 zeros. Summing the inverse of the corresponding torsions, we numerically obtain $0.10320 + 0.00274 i$ from the first system and $-0.10320 - 0.00274 i$ from the second one. This verifies Conjecture \ref{conj:intro} numerically.

\begin{remark}\label{rmk:allcomponent}
The above computation shows that the sum in Conjecture \ref{conj:intro} should be considered over all components of $X^\mathrm{irr}(M)$, not only the component containing the geometric representation.
\end{remark}

\section{Further directions} \label{sec:further}

In this paper, we study the case  when there is a regular  defect  of maximal type along a knot $\mathcal{K}$.  Our study can be extended to the case  of a closed 3-manifold $\mathcal{M}$ without any defect. In the case, the corresponding SQFT $\mathcal{T}_{N}[\mathcal{M}]$ does not have any flavor symmetry and its twisted partition depends only on the discrete choice $g$ (genus of Riemann surface) and $N$. Let 
\begin{align*}
\begin{split}
\mathcal{I}_N (\mathcal{M},g) &:= (\textrm{Twisted index of $\mathcal{T}_N[\mathcal{M}]$ theory on $\Sigma_g \times S^1$})
\\
&= \textrm{Tr}_{\mathcal{H}^{E=0}_N(\mathcal{M}\times \Sigma_g)} (-1)^{\hat{R}}
\end{split}
\end{align*}
The corresponding 3D-3D relation was derived in \cite{Gang:2019uay,Benini:2019dyp}
\begin{align}
\mathcal{I}_N (\mathcal{M},g)  = \left( \dfrac{|\textrm{Hom}\left(\pi_1 (\mathcal{M}), \mathbb{Z}_N \right)|  } {N }\right)^{g-1} \mkern-20mu \sum_{[\rho] \in \frac{X^{\rm irr}_N(\mathcal{M})}{\textrm{Hom}\left(\pi_1 (\mathcal{M}), \mathbb{Z}_N \right)}} \mkern-20mu \left(\mathrm{Tor}(\mathcal{M},\mathfrak{g}_\rho) \right)^{g-1} \label{twistecd index for closed 3-manifold}
\end{align}
Here $X^{\rm irr}_N(\mathcal{M})$ denotes the set of irreducible $\mathrm{SL}_N(\mathbb{C})$ characters of $\pi_1(\mathcal{M})$. In the summation, two irreducible characters are considered to be equivalent if they are related to each other by tensoring a $\mathbb{Z}_N$ (center subgroup of $\mathrm{SL}_N(\mathbb{C})$) character. From the  argument in Section \ref{sec : no ground state}, we expect that $\textrm{dim} \mathcal{H}^{E=0}_N(\mathcal{M}\times \Sigma_{g=0})=0$ and thus
\begin{align*}
\mathcal{I}_N(\mathcal{M},g=0) =0.
\end{align*}

When $g=0$, there is another interesting property on $\Sigma_{g=0}$ which is absent for higher $g$. As $\Sigma_{g=0} = S^2$ admits $SO(3)$ isometry, we can introduce an Omega-deformation parameter, say $q$, on $\mathbb{B}= \Sigma_{g=0}\times S^1$ using the isometry. We can consider the refined twisted index for $g=0$ \cite{Benini:2015noa}
\begin{align*}
\mathcal{I}_N (\mathcal{M}, g=0 ;q) =  \textrm{Tr}_{\mathcal{H}^{E=0}_N(\mathcal{M}\times \Sigma_{g=0})} (-1)^{\hat{R}}q^{j_3}
\end{align*}
where $j_3$ is the Cartan of the $SO(3)$ isometry group on  $\Sigma_{g=0}$. The 3D-3D relation for the refined index is   \cite{Benini:2019dyp}
\begin{align*}
\begin{split}
&\mathcal{I}_N (\mathcal{M},g=0;q)  
\\
&= \left( \dfrac{|\textrm{Hom}\left(\pi_1 (\mathcal{M}), \mathbb{Z}_N \right)|  } {N }\right)^{-1} \mkern -30mu \sum_{[\rho] \in \frac{X^{\rm irr}_N(\mathcal{M})}{\textrm{Hom}\left(\pi_1 (\mathcal{M}), \mathbb{Z}_N \right)}} \mkern -23mu \exp \left( \sum_{n=0} 2 S_{2n+1}(\mathcal{M}, \mathfrak{g}_\rho) \hbar^{2n} \right)\bigg{|}_{\hbar := \log q}\;.
\end{split}
\end{align*}
Here $S_{k}(\mathcal{M}, \mathfrak{g}_\rho)$ is the $k$-loop perturbative invariant of $\mathrm{SL}_N(\mathbb{C})$ Chern-Simons theory on $\mathcal{M}$ around the flat-connection corresponding to the character $[\rho]$. In the unrefined case ($q=1$ or equivalently $\hbar =0$), only the 1-loop part $S_1 (\mathcal{M}, \mathfrak{g}_\rho) = -\frac{1}2 \log \textrm{Tor}(\mathcal{M}, \mathfrak{g}_\rho)$ contributes to the index. Since  $\textrm{dim} \mathcal{H}^{E=0}_N(\mathcal{M}\times \Sigma_{g=0})=0$, we have the following generalized vanishing conjecture for hyperbolic $\mathcal{M}$:
\begin{align*}
\sum_{[\rho] \in \frac{X^{\rm irr}_N(\mathcal{M})}{\textrm{Hom}\left(\pi_1 (\mathcal{M}), \mathbb{Z}_N \right)}} \mkern -20mu \exp \left( \sum_{n=0} 2 S_{2n+1}(\mathcal{M}, \mathfrak{g}_\rho) \hbar^{2n} \right) =0.
\end{align*}
Expanding the LHS as formal power series in $\hbar$, the conjecture says that   the series vanishes at every order in $\hbar$. Unfortunately, the perturbative invariants $S_{k}(\mathcal{M}, \mathfrak{g}_{\rho})$ for $k>1$ still lack a mathematical rigorous definition although there are some attempts  \cite{Bae:2016jpi,Gang:2017cwq} using state-integral models. 
 
Another  interesting future  direction is studying large $N$ limit of the twisted index in \eqref{twistecd index for closed 3-manifold}. The limit is interesting since the twisted index at large $N$ computes the supersymmetric microstates of supersymmetric black holes in anti-de-Sitter space-time.  
Refer to \cite{Gang:2018hjd,Gang:2019uay,Bae:2019poj} for studies in this direction. In \cite{Gang:2018hjd,Gang:2019uay}, it was assumed that the irreducible character $[\rho] = \rho_N \cdot [\rho_{\rm hyp}]$  and its complex conjugation give the most exponentially dominant contributions to the above summation in the large $N$ limit. Here $[\rho_{\rm hyp}]$ is the $\mathrm{SL}_2(\mathbb{C})$ character for the complete hyperbolic structure and $\rho_N :\mathrm{SL}_2(\mathbb{C})\rightarrow \mathrm{SL}_N(\mathbb{C})$ is the principal embedding. The asymptotic limit of the adjoint torsion twisted by   $\rho_N \cdot [\rho_{\rm hyp}]$ can be studied using the mathematical results in \cite{park2016reidemeister}. With the assumption and the mathematical result, the following large $N$ behavior is expected (for every hyperbolic $\mathcal{M}$ and $g>1$)
\begin{align*}
\lim_{N\rightarrow \infty } \frac{1}{N^3}\log |\mathcal{I}_N (\mathcal{M}, g  )|  = (g-1) \cdot \frac{\textrm{vol}(\mathcal{M})} {3\pi}.
\end{align*}
The result is compatible with the Bekenstein-Hawking entropy  of the corresponding AdS black holes \cite{Gang:2018hjd,Gang:2019uay,Bae:2019poj,Benini:2019dyp}. It would be interesting to prove or disprove the assumption used in the large $N$ analysis. 

\section*{Acknowledgments}
We are grateful to Teruaki Kitano, Francesco Benini, and Kazuya Yonekura for their helpful discussions. The researches of DG  were supported in part by the National Research Foundation of Korea  (NRF) grant 2019R1A2C2004880. The work of SK was supported by the National Research Foundation of Korea (NRF) grant funded by the Korea government (MSIT) (No. 2019R1C1C1003383).



\bibliographystyle{abbrv}
\bibliography{biblog}

\end{document}